\newcommand{\N}{\mathbb{N}}
\newcommand{\R}{\mathbb{R}}
\newcommand{\BR}{\R \cup \set{+\infty}}
\newcommand{\inner}[2]{\langle #1\,,\,#2\rangle} 
\newcommand{\norm}[1]{\|{#1}\|}
\newcommand{\normq}[1]{\|{#1}\|^2}
\newcommand{\Ha}{{\mathcal{H}}} 
\newcommand{\HH}{{\mathcal{H}}} 
\newcommand{\tos}{\rightrightarrows} 
\newcommand{\wto}{\rightharpoonup}
\newcommand{\comenta}[1]{} 
\newcommand{\set}[1]{\{#1\}}
\newcommand{\Set}[1]{\left\{#1\right\}}
\newcommand{\lab}[1]{\label{#1}}
\newcommand{\bprop}{\begin{proposition}}
\newcommand{\eprop}{\end{proposition}}
\newcommand{\blemm}{\begin{lemma}}
\newcommand{\elemm}{\end{lemma}}
\newcommand{\bdefi}{\begin{definition}}
\newcommand{\edefi}{\end{definition}}
\newcommand{\btheo}{\begin{theorem}}
\newcommand{\etheo}{\end{theorem}}
\newcommand{\brema}{\begin{remark}}
\newcommand{\erema}{\end{remark}}
\newcommand{\bassu}{\begin{assumption}}
\newcommand{\eassu}{\end{assumption}}
\newcommand{\bcoro}{\begin{corollary}}
\newcommand{\ecoro}{\end{corollary}}
\newcommand{\benum}{\begin{enumerate}[label = \emph{(\alph*)}]}
\newcommand{\eenum}{\end{enumerate}}
\newcommand{\mgap}{\vspace{.1in}}
\newcommand{\beq}{\begin{equation}}
\newcommand{\eeq}{\end{equation}}
\DeclareMathOperator{\Dom}{Dom} 
\DeclareMathOperator{\Gra}{Gra}
\DeclareMathOperator{\dom}{dom}
\DeclareMathOperator{\epi}{epi}
\newtheorem{theorem}{Theorem}[section] 
\newtheorem{lemma}[theorem]{Lemma}
\newtheorem{corollary}[theorem]{Corollary}
\newtheorem{proposition}[theorem]{Proposition}
\newtheorem{remark}[theorem]{Remark}
\newtheorem{definition}[theorem]{Definition}
\newtheorem{assumption}[theorem]{Assumption}
\newcommand{\tiw}{{\widetilde w}}
\def\widebreve#1{\mathop{\vbox{\m@th\ialign{##\crcr\noalign{\kern\p@}%
  \brevefill\crcr\noalign{\kern0.1\p@\nointerlineskip}%
  $\hfil\displaystyle{#1}\hfil$\crcr}}}\limits}
\def\brevefill{$\m@th \setbox\z@\hbox{}%
 \hfill\scalebox{0.7}{\rotatebox[origin=c]{90}{(}} \kern4pt $}
\begin{document}
\title{A strongly convergent inertial inexact proximal-point algorithm for monotone inclusions with applications to variational inequalities}
\author{ 
M. Marques Alves
\thanks{ Departamento de Matem\'atica,
Universidade Federal de Santa Catarina,
Florian\'opolis, Brazil, 88040-900 ({\tt maicon.alves@ufsc.br}).
The work of this author was partially supported by CNPq grant 308036/2021-2 and Fundação de Amparo a
Pesquisa e Inovação do Estado de Santa Catarina (FAPESC), Edital 21/2024, Grant 2024TR002238.}
\and
J. E. Navarro Caballero
\thanks{ Departamento de Matem\'atica,
Universidade Federal de Santa Catarina,
Florian\'opolis, Brazil, 88040-900 ({\tt juan.caballero@ufsc.br}). The work of this author was partially supported by CAPES 
scholarship 88887.829718/2023-00.}
\and
M. Geremia
\thanks{ Departamento de Ensino, Pesquisa e Extensão, Instituto Federal de Santa Catarina (IFSC),
Chapecó, Brazil, 89813-000 ({\tt marina.geremia@ifsc.edu.br}).}
\and
R. T. Marcavillaca
\thanks{ Centro de Modelamiento Matem\'atico (CNRS UMI2807), Universidad de Chile, Santiago, Chile, 8370458 ({\tt raultm.rt@gmail.com; rtintaya@dim.uchile.cl}). The work of this author was partially supported by  BASAL fund FB210005 for center of excellence from ANID-Chile.
}
}
\date{\emph{\small Dedicated to Juan Enrique Martínez-Legaz on the occasion of his 70th birthday}}

\maketitle
\pagestyle{plain}

\begin{abstract}
We propose an inertial variant of the strongly convergent inexact proximal-point (PP) method of Solodov and Svaiter (2000) for monotone inclusions. 
We prove strong convergence of our main algorithm under less restrictive assumptions on the inertial parameters when compared to previous analysis of inertial PP-type algorithms, which makes our method of interest even in finite-dimensional settings.
We also performed an iteration-complexity analysis and applied our main algorithm to variational inequalities for monotone operators, obtaining strongly convergent (inertial) variants of Korpolevich's extragradient, forward-backward and Tseng's modified forward-backward methods.  
Preliminary numerical experiments indicate that our strongly convergent variant of Tseng's modified forward-backward method performs well on certain matrix game problems.
\\
\\ 
2000 Mathematics Subject Classification: 47H05, 47N10, 90C33.
%
%
%
%
%
\\
\\
Key words: Monotone inclusions, inertial algorithms, strong convergence, proximal-point, variational inequalities. 
\end{abstract}

\pagestyle{plain}

\section{Introduction} \lab{sec:int}

We consider the \emph{monotone inclusion problem}
\begin{align} \lab{eq:prob}
0 \in T(x)
\end{align}
where $T \colon \Ha \tos \Ha$ is a (set-valued) maximal monotone operator as well as the (structured) inclusions
\begin{align}\lab{eq:prob02}
0 \in F(x) + B(x)
\end{align}
and
\begin{align}\lab{eq:prob03}
0 \in F(x) + N_C(x)
\end{align} 
where $F(\cdot)$ is a (single-valued) monotone map, $B(\cdot)$ is (set-valued) maximal monotone and 
$N_C(\cdot)$ is the normal cone of the convex set $C$ (more details on notation will be given in Subsection \ref{subsec:gn} below). Under mild assumptions on the operators $F(\cdot)$ and $B(\cdot)$, and the set $C$, problems \eqref{eq:prob02} and \eqref{eq:prob03} are instances of the general problem \eqref{eq:prob}, where $T = F + B$ and $T = F + N_C$, respectively.
Problem \eqref{eq:prob} and its structured versions \eqref{eq:prob02} and \eqref{eq:prob03} appear in a wide range of applications, including convex optimization, economics, machine learning, among others (see, e.g.,~\cite{BC11} and references therein).
Using the definition of $N_C(\cdot)$, it is simple to check that \eqref{eq:prob03} is also equivalent to the problem of
finding $x\in C$ satisfying
\begin{align}\label{eq:prob04}
\inner{F(x)}{y - x}\geq 0\quad \mbox{for all}\;\;y\in C,
\end{align}
which is traditionally known as the (Stampacchia)  \emph{variational inequality problem} (VIP) for $F(\cdot)$ and $C$. 
Since in this paper the (monotone inclusion) formulation \eqref{eq:prob03} is more convenient, from now on we will also refer to 
\eqref{eq:prob03} as a VIP.

Our focus will be on solving \eqref{eq:prob}, \eqref{eq:prob02} and \eqref{eq:prob03} numerically. Specifically, we aim to design and analyze numerical schemes capable of computing approximate solutions to these problems.
Regarding \eqref{eq:prob}, the most popular algorithm for computing its approximate solutions is the 
\emph{proximal-point} (PP) 
\emph{method}, which can be described as follows: given $x_0\in \Ha$, define the sequence $\set{x_k}$ iteratively as
\begin{align}\label{eq:ppm}
x_{k+1} \approx (\lambda_k T + I)^{-1}x_k\quad \mbox{for all}\quad  k\geq 0,
\end{align}
where $\lambda_k > 0$. In \eqref{eq:ppm}, ``$\approx$'' means that $x_{k+1}$ is an approximation to the (exact)
 iterate $(\lambda_k T + I)^{-1}x_k$, defining, in this way, inexact versions of the PP method.
In the seminal paper~\cite{Roc76}, Rockafellar studied the \emph{weak} convergence of \eqref{eq:ppm} under summable error criteria and applied the obtained results to the justification of augmented Lagrangian algorithms for convex programming.
The \emph{strong} convergence of sequence $\set{x_k}$ as in \eqref{eq:ppm} was left as an open problem, later on solved negatively by Güller in~\cite{gul-new.sjo92}.  

The problem of forcing the strong convergence of the PP algorithm \eqref{eq:ppm} (by conveniently modifying 
the iterative process) was addressed by Solodov and Svaiter~\cite{SS00} in the setting of relative-error inexact PP methods,  
which we briefly discuss next. Relative-error inexact PP-type methods appeared as an alternative to proximal algorithms employing summable error criteria for subproblems; the first methods of this type were proposed in~\cite{sol.sva-hpe.svva99, sol.sva-hyb.jca99} and subsequently studied, e.g., in~\cite{AMS16, MS10, MS12, MS13}.
The key idea consists in observing that the exact PP iteration $x_+ = (\lambda T + I)^{-1}x$ can be decoupled as an
inclusion-equation system
\begin{align} \lab{eq:dec.prox}
  v \in T(x_+),\quad  \lambda v + x_+ - x = 0,
\end{align}
and then relaxing \eqref{eq:dec.prox} within relative-error tolerance criteria -- see Definition \ref{def:ine-sol} 
and Eq. \eqref{eq:wood} below.

The starting point for this contribution is the paper~\cite{SS00}: we will introduce \emph{inertial effects} on the iteration of~\cite[Algorithm 1]{SS00}, obtaining in this way a strongly convergent inertial (relative-error) inexact PP method for solving \eqref{eq:prob}. We will also study the iteration-complexity of our algorithm (see Algorithm \ref{alg:main} below) and apply the results of strong convergence and iteration-complexity to \eqref{eq:prob02} and \eqref{eq:prob03}, through the derivation of
strong convergent inertial variants of the Korpolevich extragradient method, for solving \eqref{eq:prob03}, as well as of the Tseng's modified forward-backward and forward-backward methods for \eqref{eq:prob02}. 

Proximal algorithms with inertial effects for monotone inclusions and related problems were first proposed in the seminal paper~\cite{AA01} and subsequently developed in various research directions also by other authors (see, e.g., 
\cite{AEGM20, Att21, AC20, ACCR18, AP19,bot.acm2019, CG17} and references therein).
The main idea consists in at a current iterate, say $x_k$, introduce an ``inertial effect'' by extrapolating $x_k$ in the direction 
$x_k - x_{k-1}$:
\[
 w_k = x_k + \alpha_k (x_k - x_{k-1}),
\]
where $\alpha_k\geq 0$, and then update the current information from $w_k$ instead of $x_k$ -- see \eqref{eq:defw} and \eqref{eq:wood} below. 
As it was mentioned earlier, our algorithms will benefit from inertial effects on the iteration (see also the comments and remarks following Algorithm \ref{alg:main} for more details). 

\mgap

\noindent
{\bf Main contributions.} Our main results can be summarized as follows:
\begin{itemize}
\item[(i)] Algorithm \ref{alg:main} for solving \eqref{eq:prob} is an inertial modification of~\cite[Algorithm 1]{SS00} with a somehow more general relative-error criterion for subproblems. Its strong convergence and iteration-complexity analyzes are presented in Theorems \ref{th:sconv} and \ref{th:pcom}, respectively. We also refer the reader to the several comments and remarks following Algorithm \ref{alg:main} for additional details.
\item[(ii)] Algorithm \ref{alg:korp} is a strongly convergent inertial version of the famous Korpolevich's extragradient method for VIPs. The main results regarding Algorithm \ref{alg:korp} are summarized in Theorem \ref{th:korp} below. In the same way as in the previous item, we also refer to the comments and remarks following Algorithm \ref{th:korp} for more details.
\item[(iii)] Algorithms \ref{alg:tseng} and \ref{alg:fb} in Section \ref{sec:tseng} are strongly convergent inertial variants of the Tseng's forward-backward and forward-backward methods for solving \eqref{eq:prob02}, respectively.  The main results of this section are summarized in Theorems \ref{th:tseng} and \ref{th:fb.main}.
\end{itemize} 

\mgap

\noindent
{\bf Most related works.} In~\cite{SS00}, by introducing a simple modification of the (relative-error inexact) PP algorithm for monotone inclusions, Solodov and Svaiter obtained a strongly convergent inexact PP method for \eqref{eq:prob}.
In~\cite{BC01}, Bauschke and Combettes introduced and studied a general framework for forcing the strong convergence of 
(weakly convergent) Fej\'er-monotone approximation methods.
In~\cite{DJCS17}, Dong \emph{et al.}  proposed and studied an inertial forward-backward splitting algorithm with strong convergence guarantees for \eqref{eq:prob02} (assuming that $F(\cdot)$ is cocoercive) and applied the obtained results to convex optimization and split feasibility problems.

\mgap

\noindent
{\bf Organization of the paper.} In Section \ref{sec:basic}, we present some basic and preliminary results. 
In Section \ref{sec:main}, we state and study the strong convergence and iteration-complexity of our main algorithm for solving \eqref{eq:prob}, namely Algorithm \ref{alg:main}.
In Section \ref{sec:korp}, we develop a strongly convergent inertial version of the Korpolevich's extragradient method for solving 
VIPs with monotone and Lipschitz continuous operators. 
In Section \ref{sec:tseng}, we present variants of the Tseng's modified forward-backward and forward-backward methods for solving the structured inclusion \eqref{eq:prob02}.
In Section \ref{sec:ne}, we present preliminary numerical experiments on matrix game problems.

\subsection{General notation} \lab{subsec:gn}
Throughout this paper $\Ha$ denotes a (possibly infinite-dimensional) real Hilbert space with inner product 
$\inner{}{}$ and induced norm $\norm{\cdot} = \sqrt{\inner{\cdot}{\cdot}}$.
For a set-valued map $T \colon \Ha \tos \Ha$, the \emph{effective domain} and \emph{graph} of $T$ are 
$\Dom T = \set{x  \mid T(x) \neq \emptyset}$ and $\Gra T =\{(x, v) \mid v \in T(x)\}$, respectively. 
The \emph{inverse} of $T \colon \Ha \tos \Ha$ is $T^{-1} \colon \Ha \tos \Ha$ defined at
any $x\in \Ha$  by $v\in T^{-1}(x)$ if and only if $x\in T(v)$. 
The \emph{sum} of two set-valued maps $T, S\colon \Ha \tos \Ha$  is $T + S \colon \Ha \tos \Ha$, defined by 
the usual Minkowski sum $(T + S)(x) = \set{u + v \mid u\in T(x), v\in S(x)}$. 
For $\lambda > 0$, we also define $\lambda T \colon \Ha \tos \Ha$ by 
$(\lambda T)(x) = \lambda T(x) = \set{ \lambda v \mid v \in T(x)}$.
Whenever necessary, we will also identify single-valued maps $F \colon \Dom F\subset \Ha \to \Ha$ with its set-valued representation $F \colon \Ha\tos \Ha$ by $F(x) = \set{F(x)}$. 
A set-valued map $T \colon \Ha \tos \Ha$ is said to be a \emph{monotone operator} 
if $\inner{x - y}{u - v} \geq 0$ for all $(x, u)$, $(y, v) \in \Gra T$, and \emph{maximal monotone} if it is monotone and its graph $\Gra T$ is not properly contained in the graph of any other monotone operator on $\Ha$.
A single-valued map $F \colon \Dom F\subset \Ha \to \Ha$ is monotone
if $\inner{x - y}{F(x) - F(y)}\geq 0$ for all $x, y \in \Dom F$. The \emph{resolvent} of a maximal monotone operator 
$T \colon \Ha \tos \Ha$ is $J_{T} = (T + I)^{-1}$, where $I$ denotes the identity operator in $\Ha$. 
We will also denote by $\mathcal{S}$ the set of zeroes of the maximal monotone operator $T$, i.e., $\mathcal{S} = T^{-1}(0) = \set{x \mid 0\in T(x)}$.

Let $f \colon \Ha \to \BR$ be an extended real-valued function. The \emph{domain} and \emph{epigraph} of $f$
are $\dom f = \set{x \mid f(x) < +\infty}$ and $\epi f = \set{(x, \mu) \in \Ha\times \R \mid \mu \geq f(x)}$, respectively.
Recall that $f$ is \emph{proper} if $\dom f\neq \emptyset$ and \emph{convex} (resp. \emph{lower semicontinuous}) if $\epi f$ is a convex (resp. closed) subset of
$\Ha\times \R$. 
For $\varepsilon \geq 0$, the \emph{$\varepsilon$-subdifferential} of $f$ is 
$\partial_{\varepsilon}f \colon \Ha \tos \Ha$ defined as 
$\partial_{\varepsilon} f(x) = \set{ v \mid f(y) \geq f(x) + \inner{y-x}{v} - \varepsilon,\;\; \mbox{for all}\; y \in \Ha} $.
When $\varepsilon = 0 $, the maximal monotone~\cite{Roc70b} operator $\partial_{0} f$ is denoted by $\partial f$ and is called the \emph{subdifferential} of $f$. 
For a nonempty closed and convex set $C$, the \emph{indicator function} of $C$ is $\delta_C$ defined by
$\delta_C(x) = 0$ if $x\in C$ and $\delta_C(x) = +\infty$ otherwise. The \emph{normal cone} operator of $C$ is 
the set-valued map $N_C \colon \Ha \tos \Ha$ defined as 
$N_{C}(x) = \{v \mid \inner{v}{y - x} \leq 0,\;\; \mbox{for all}\, y \in C\}$ (we set $N_C(x) = \emptyset$ if $x\notin C$);
it is easy to check that $N_C = \partial \delta_C$.
The \emph{orthogonal projection} onto $C$ is denoted by $P_C$, i.e., $P_C(x)$ is the unique element in $C$ such that $\norm{x - P_C(x)}\leq \norm{x - y}$ for all $y\in C$.

By $\R_+$ and $\R_{++}$ we denote the set of nonnegative and (strictly) positive reals, respectively. 
For $n\in \N$ (the set of natural numbers), we let $[n] \coloneqq \set{0, \dots, n}$.
Finally, strong and weak convergence of sequences will be denoted by the usual symbols $\to$ and $\wto$, respectively.
For more details on notation and basis results on convex analysis and monotone operators we refer the reader 
to~\cite{BC11, Roc70, RW98}.

\section{Preliminaries and basic results} \lab{sec:basic}

In this section, we state some basic and preliminary results, mainly related to the notion of (relative-error) inexact solutions for the proximal subproblems we will employ in this paper. To this end, we begin by recalling some basic facts about $\varepsilon$-enlargements of set-valued maps.

For a set-valued map $T \colon \Ha \tos \Ha$ and $\varepsilon\geq 0$, the $\varepsilon$-enlargement $T^\varepsilon \colon \Ha \tos \Ha$ of $T$ is defined as
\begin{align} \lab{def:enlar}
T^{\varepsilon}(x) = \left\{v \mid \langle u-v ,y-x \rangle \geq - \varepsilon\quad 
\mbox{for all}\;\; (y, u)\in \Gra T\right\}.
\end{align}
If $T$ is monotone, then the inclusion $T(x) \subset T^{\varepsilon}(x)$ trivially holds (this justifies the name ``enlargement''  of $T$).
For $N_C$, i.e., for the normal cone of a (nonempty) closed and convex set, we have
$N_C^\varepsilon = \partial_\varepsilon \delta_C$.
Other relevant and useful properties of $T^\varepsilon$ (for this work) are summarized in the next proposition
(for a proof see, e.g., \cite[Lemma 3.1 and Proposition 3.4(b)]{bur.sag.sva-enl.99}).

\bprop \label{pr:teps}
Let $T, S \colon \Ha\tos \Ha$ be set-valued maps.  Then,
\begin{enumerate}[label = \emph{(\alph*)}]
\item \lab{teps.seg}  If $\varepsilon_1 \leq \varepsilon_2$, then
$T^{\varepsilon_1}(x)\subset T^{\varepsilon_2}(x)$ for all $x \in \Ha$.
\item \lab{teps.ter}  $T^{\varepsilon_1}(x)+S^{\,\varepsilon_2}(x) \subset
(T+S)^{\varepsilon_1+\varepsilon_2}(x)$ for all $x \in \Ha$ and
$\varepsilon_1, \varepsilon_2\geq 0$.
\item \lab{teps.qua} $T$ is monotone if and only if $\Gra T  \subset \Gra T^{0}$.
\item \lab{teps.qui} $T$ is maximal monotone if and only if $T = T^{0}$.
\item \lab{teps.sex} If $f \colon \Ha\to \R\cup \{+\infty\}$ is proper lower semicontinuous and convex, then
  $\partial_\varepsilon f(x)\subset (\partial f)^{\varepsilon}(x)$ for
  all $x\in \Ha$ and $\varepsilon \geq 0$.
\item \lab{teps.sab} If $T$ is maximal monotone, $\{(y_k,v_k,\varepsilon_k)\}$ is
such that $v_k\in T^{\varepsilon_k}(y_k)$, $y_k \wto y$,
$v_k \to v$ and $\varepsilon_k \to \varepsilon$, then
$v\in T^{\varepsilon}(y)$.
\item \lab{teps.dom} If $u \in \partial f (x)$ and $y\in \dom f$, then $u \in \partial_{\varepsilon} f(y)$, where 
$\varepsilon = f(y) - \left(f(x) + \inner{y-x}{u}\right) $.
\end{enumerate}
\eprop

\mgap

\noindent
As we discussed in the introduction, the notion of relative-error criterion for the subproblems
of the exact PP iteration $x_+ = (\lambda T + I)^{-1}x$ can be derived by observing that the iteration can be decoupled as the inclusion-equation system \eqref{eq:dec.prox}, namely,
\begin{align} \label{eq:20}
\begin{cases}
v \in T(y),\\[1mm]
\lambda v + y -x = 0					 
\end{cases}
\end{align}
where $ y = x_+$.
The following concept of approximate solutions for \eqref{eq:20} was introduced for the first time in \cite{SS01}.
\bdefi \label{def:ine-sol}
A triple $(y, v, \varepsilon) \in \Ha \times \Ha \times \R_+$ is an $\sigma$-approximate solution of the proximal system \eqref{eq:20} at $(x, \lambda)\in \Ha\times \R_{++}$ if $\sigma \in [0,1)$ and
\begin{align}\label{def:sigma-solution}
\begin{cases}
v \in T^{\varepsilon}(y),\\[2mm]
\norm{\lambda v + y - x}^2 + 2 \lambda\varepsilon \leq \sigma^2 \left(\norm{\lambda v}^2 + \norm{y-x}^2\right).
\end{cases}
\end{align}
\edefi

Note that if we take $\sigma = 0$ in \eqref{def:sigma-solution}, then it trivially follows that
$\varepsilon = 0$ and $\lambda v + y - x = 0$, which is to say, in view of Proposition \ref{pr:teps}(d),
that $v\in T(y)$ and $\lambda v + y - x = 0$, or in other words, $y = (\lambda T + I)^{-1}x$. Hence the above definition of inexact solution for the subproblems of the PP algorithm gives $y$ as the exact iteration $x_+ = (\lambda T + I)^{-1}x$ in the (exact) case $\sigma = 0$. Moreover, the error in both the inclusion (given by the $\varepsilon$-enlargement) and the equation in \eqref{eq:20} is ``relative'' to the displacement $y - x$ and $\lambda v$.

\mgap

We next state some basic properties of approximate solutions defined above. 
The following lemma was proved in \cite[Lemma 2]{SS01}.

\blemm \lab{lm:equiv}
A triple $ (y, v, \varepsilon)$ is an $\sigma$-approximate solution of the proximal system \eqref{eq:20} at $(x, \lambda)$ \emph{(}in the sense of \emph{Definition \ref{def:ine-sol}}\emph{)}
if and only if 
\begin{align*} 
\langle x - y , v \rangle - \varepsilon \geq \frac{1-\sigma^2}{2\lambda} \left(\norm{ \lambda v}^2 + \norm{x - y}^2\right).
\end{align*}
Furthermore, the following statements are equivalent
\begin{align*}
\begin{cases}
v = 0 , \\
x \in T^{-1}(0),\\
y = x,
\end{cases}
\end{align*}
and any of them implies $\varepsilon = 0$.
\elemm

\mgap

Next lemma will be useful for the analysis (of strong convergence and iteration-complexity) of Algorithm \ref{alg:main} below. Recall that, for a nonempty closed and convex subset $C$ of $\Ha$, the \emph{orthogonal projection} $P_C(x)$ of $x$ onto $C$ is the unique point $P_C(x)$ in $C$ such that 
$\|x - P_C(x)\|\leq \|x - y\|$ for all $y\in C$.

\blemm \lab{lm:olimp}
If $(y, v, \varepsilon)$ is an $\sigma$-approximate solution of the proximal system \eqref{eq:20}
at $(x, \lambda)$, in the sense of \emph{Definition \ref{def:ine-sol}}, then
\begin{align}\label{eq:lower}
\norm{P_H(x) - x} \geq \frac{1 - \sigma^2}{2} \max \left\{ \norm{\lambda v}, \norm{x-y}\right\},
\end{align}
where
\[
H \coloneqq \{z \mid \langle z-y,v \rangle \leq \varepsilon \}.   
\]
\elemm
\begin{proof}
Note first that if $v = 0$, then $H = \Ha$, and so $P_H = I$, which gives that the left-hand side of \eqref{eq:lower} is equal to zero.
On the other hand, in this case, we also have that the right-hand side of \eqref{eq:lower} is zero by Lemma \ref{lm:equiv}.
Let us consider now the case $v\neq 0$. In this case (see, e.g., \cite[Example 28.16]{BC11}), 
\begin{align*}
P_H (x) = x - \frac{\inner{x - y}{v} - \varepsilon}{\norm{v}^2} v
\end{align*}		        
and so
\begin{align}\lab{eq:phx}
\norm{P_H(x) - x} = \frac{\langle x - y,v \rangle - \varepsilon}{\norm{v}}.
\end{align}
Using (again) Lemma \ref{lm:equiv}, we obtain
\begin{align}\lab{eq:phx02}
\begin{aligned}
\frac{ \langle x - y ,v \rangle - \varepsilon}{ \norm{v}} &\geq \frac{1 - \sigma^2}{2} \left(\lambda \norm{v} + \frac{\norm{x - y}^2}{\lambda \norm{v}}\right)\\
 & \geq \frac{1-\sigma^2}{2}\lambda \norm{v}.
\end{aligned}
\end{align}
On the other hand, using the inequality $a + b\geq \sqrt{ab}$ with $a = \lambda\norm{v}$ and 
$b = \normq{x - y}/(\lambda\norm{v})$, we have
\begin{align}\lab{eq:phx03}
\lambda \norm{v} + \frac{\norm{x - y}^2}{\lambda \norm{v}}\geq \norm{x - y}.
\end{align}
The desired result now follows from \eqref{eq:phx}--\eqref{eq:phx03}.
\end{proof}

\mgap

Finally, we will also need the following well-known property of the projection.

\blemm
 Let $C \subset \Ha$ be nonempty, closed and convex and 
let $P_C$ be the orthogonal projection onto $C$. Then, for all $x, y\in \Ha$ and $z\in C$,
\begin{align}
\label{projetion-no-expasive}
 &\norm{P_C(x) - P_C(y) }^2 \leq \norm{x - y}^2 - \norm{\left[x - P_C(x)\right] - \left[y - P_C(y)\right]}^2,\\[4mm]
\label{projetion-caracterization}
&\langle x - P_C(x), z - P_C(x) \rangle \leq 0.
\end{align}
\elemm

\section{A strongly convergent inertial inexact proximal-point algorithm} \lab{sec:main}

In this section, we consider the general (monotone) inclusion problem \eqref{eq:prob}, i.e.,
\begin{align}\lab{eq:probm}
0\in T(x)
\end{align}
where the following assumptions are assumed to hold:
\begin{itemize}
\item[A1.] \lab{ass:a1} 
$T \colon \Ha\tos \Ha$ is a maximal monotone operator.
\item[A2.] \lab{ass:a2} 
The \emph{solution set} $\mathcal{S} \coloneqq T^{-1}(0)$ of \eqref{eq:probm} is nonempty.
\end{itemize}
For numerically solving \eqref{eq:probm}, we present and study the strong convergence and iteration-complexity of a strongly convergent inertial inexact proximal-point (PP) algorithm (see Algorithm \ref{alg:main} below and the comments following it). The main results are Theorems \ref{th:sconv} and \ref{th:pcom}, on strong convergence and (pointwise) iteration-complexity, respectively.

\mgap

Algorithm \ref{alg:main} consists of the following main ingredients:
\begin{description}
\item[Inertial effects] \emph{Inertial effects} on the iteration produced by two ``control'' sequences
$\set{\alpha_k}$ and $\set{\beta_k}$ -- see \eqref{eq:defw}. 
\item[Relative-error criterion for subproblems] A flexible \emph{relative-error criterion} for the proximal subproblems, allowing errors both in the inclusion and in the equation of the proximal system -- see Definition \ref{def:ine-sol} and Eq. \eqref{eq:wood}.   
\item[Orthogonal projection onto half-spaces] Definition of the next iterate $x_{k+1}$ as the \emph{orthogonal projection} onto the intersection of the half-spaces $H_k$ and $W_k$ -- see \eqref{eq:wood02} and \eqref{eq:wood04}.
\end{description}

\mgap

Next comes the algorithm:

\mgap
\mgap

\begin{algorithm}[H] \label{alg:main}
\caption{A strongly convergent inertial inexact PP algorithm for solving  \eqref{eq:probm}}
\SetAlgoLined
\KwInput{$x_0  = x_{-1} \in \Ha$ and $\sigma \in [0,1)$}
\For{$k = 0, 1, 2,\dots$}{
 Choose $\alpha_k, \beta_k \geq 0$ and set
\begin{align} \lab{eq:defw}
\begin{aligned}
w_k &= x_k + \alpha_k (x_k - x_{k-1}),\\[2mm]
\widetilde{w}_k &= w_k + \beta_k (w_k - x_0).
\end{aligned}
\end{align}
\\
 Choose $\lambda_k>0$ and compute an $\sigma$-approximate solution (in the sense of Definition \ref{def:ine-sol}) at $(\widetilde w_k, \lambda_k)$, i.e., compute $(y_k,v_k,\varepsilon_k)$ such that
 \begin{align}
\begin{aligned} \lab{eq:wood}
\begin{cases}
 v_k \in T^{\varepsilon_k}(y_k),\\[2mm]
 \normq{\lambda_k v_k + y_k - \widetilde w_k}
  +2\lambda_k\varepsilon_k
  \leq  \sigma^2\left(\normq{\lambda_k v_k} + \normq{y_k - \widetilde w_k}\right).
\end{cases}
\end{aligned}
 \end{align}  
 \\
 Define
\begin{align} \lab{eq:wood02}
\begin{aligned}
 H_k &= \{z \mid \langle z-y_k,v_k \rangle \leq \varepsilon_k\},\\[2mm]
 W_k &= \{z \mid \langle z-x_k,x_0 - x_k \rangle \leq 0\}.
\end{aligned}
\end{align}
\\
Set
\begin{align}\lab{eq:wood04}
 x_{k+1} = P_{H_k \cap W_k}(x_0).
\end{align}
 }
\end{algorithm}

\mgap
\mgap

Next we make some comments regarding Algorithm \ref{alg:main}.
\begin{enumerate}[label = (\roman*)]
\item The extrapolation steps defined in \eqref{eq:defw} introduce \emph{inertial effects} on the iterations generated by Algorithm \ref{alg:main}. We first mention that $w_k$, defined by the inertial parameter $\alpha_k$, is the typical extrapolation effect usually presented in many inertial-type algorithms (see, e.g., \cite{AA01, AEGM20, AM20, AC20, ACCR18, AP19, BS15}). On the other hand, the additional extrapolation represented by 
$\widetilde w_k$ is specially designed for the particular iteration mechanism of Algorithm \ref{alg:main}.
Conditions on both $\set{\alpha_k}$ and $\set{\beta_k}$ to ensure the (strong) convergence and iteration-complexity analysis of Algorithm \ref{alg:main} will be given in Assumption \ref{ass:ab} (see also Remark \ref{rm:ab} following it).
\item Algorithm \ref{alg:main} is a generalization (an inertial version) of \cite[Algorithm 1]{SS00} and many results of this work are inspired by the latter reference. The relative-error condition \eqref{eq:wood} is more general than the corresponding one  in \cite{SS00}, more precisely \cite[Definition 1]{SS00}; actually \eqref{eq:wood} appeared for the first time in \cite[Definition 1]{SS01} for a different version of the PP algorithm. As we mentioned in the introduction, relative-error criteria similar to \eqref{eq:wood} for subproblems of the PP algorithm and its variants have been employed in different contexts and by many authors (see, e.g.~\cite{AMS16, MS10, MS12, MS13, SS01} and references therein).
\item Note that if we set $\sigma = 0$ in \eqref{eq:wood} we obtain $\varepsilon_k = 0$,
$\lambda_k v_k + y_k - \widetilde w_k = 0$ and $v_k\in T^0(y_k)$, which is to say that
$y_k = (\lambda_k T + I)^{-1}(\widetilde w_k)$ and $v_k = \frac{1}{\lambda_k}(\widetilde w_k - y_k)$ (we recall that, since $T$ is maximal monotone, we have $T^0 = T$). Hence, assuming that the resolvent operator $(\lambda T + I)^{-1}$ is easy to evaluate, it follows that the triple 
$(y_k, v_k, \varepsilon_k) \coloneqq \left((\lambda_k T + I)^{-1}(\widetilde w_k), \frac{1}{\lambda_k}(\widetilde w_k - y_k), 0\right)$ always satisfies the relative-error condition \eqref{eq:wood}. 
In this paper, we don't make such an assumption and are interested in the opposite cases, namely those cases where $(\lambda T + I)^{-1}$ is hard to compute. Prescriptions on how to compute a triple $(y_k, v_k, \varepsilon_k)$ satisfying \eqref{eq:wood} in the general case $\sigma\in [0,1)$ will depend on the particular structure of $T(\cdot)$ in \eqref{eq:probm} -- see, for instance, Sections \ref{sec:korp} and \ref{sec:tseng}.
\item The half-spaces $H_k$ and $W_k$ as in \eqref{eq:wood02} are exactly the same as in \cite[Algorithm 1]{SS00}. They are crucial to ensure the strong convergence of the sequences generated by Algorithm \ref{alg:main}. As it was discussed in \cite{SS00}, the 
definition of $x_{k+1}$ as in \eqref{eq:wood04} depends on the explicit computation of the orthogonal projection onto
$H_k \cap W_k$, which essentially reduces (at most) to the solution of a $2\times 2$ linear system (see \cite[pp. 195 and 196]{SS00} and 
\cite[Propositions 28.18 and 28.19]{BC11}). The well-definedness of Algorithm \ref{alg:main} -- regarding \eqref{eq:wood04} -- will be discussed in 
Appendix \ref{subsec:wd} below. 
\item Algorithm \ref{alg:main} can be used both as a practical method, when an inner algorithm is available to compute a triple $(y_k, v_k, \varepsilon_k)$ satisfying \eqref{eq:wood}, and as a framework for the design and analysis of other more concrete schemes. The latter idea will be further explored in Sections \ref{sec:korp} and \ref{sec:tseng} for designing strongly convergent inertial versions of the Korpelevich, Tseng's modified forward-backward and forward-backward methods for variational inequalities and structured monotone inclusions. 
\end{enumerate}

\subsection{Convergence analysis} \lab{subsec:ca}
In this subsection, we study the (asymptotic) strong convergence of Algorithm \ref{alg:main}; the main result is 
Theorem \ref{th:sconv} below. Propositions \ref{pr:main02} and \ref{pr:main} and Corollary \ref{cor:pacum} are auxiliary results needed to prove the main theorem.  
\bprop \lab{pr:main02}
Consider the sequences evolved by \emph{Algorithm \ref{alg:main}} and let $d_0$ denote the distance of $x_0$
to the solution set $\mathcal{S} \coloneqq T^{-1}(0)\neq \emptyset$ of \eqref{eq:probm}. Then the following statements hold:
\begin{enumerate}[label = \emph{(\alph*)}]
\item \lab{main02.seg} For all $k\geq 0$,
\[
\norm{x_{k+1}-x_0}^2 \geq \norm{x_k - x_0}^2 + \norm{x_{k+1}-x_k}^2.
\]
\item \lab{main02.ter} For all $k\geq 0$,
\[
 \norm{x_k - x_0}\leq d_0.
\]
\item \lab{main02.qua} For all $k\geq 0$,
\begin{align}\lab{eq:inv}
 \norm{x_{k+1} - \widetilde{w}_k}  &\geq \frac{1-\sigma^2}{2} \max \big\{\norm{\lambda_k v_k}, \norm{\widetilde{w}_k - y_k} \big\}.
\end{align}
\end{enumerate}
\eprop
\begin{proof}
\emph{\ref{main02.seg}} By the definition of $W_k$, we have $x_k = P_{W_k}(x_0)$. Applying \eqref{projetion-no-expasive} with $C = W_k$, $x = x_{k+1}$ and $y = x_0$, and taking into account that $x_{k+1} = P_{W_{k}}{(x_{k+1})}$ 
(because $x_{k+1} \in W_k$), we obtain
\begin{align*}
\norm{P_{W_k}(x_{k+1}) - P_{W_k}(x_0)}^2 \leq \norm{x_{k+1}-x_0}^2 - 
\norm{ \underbrace{\left[x_{k+1} - P_{W_k}(x_{k+1})\right]}_{0}  - \left[x_0 - P_{W_k}(x_0)\right]}^2.
\end{align*}
Then
\begin{align*}
\norm{x_{k+1} - x_{k}}^2 \leq \norm{x_{k+1} - x_0}^2 - \norm{x_0 - x_k}^2,
\end{align*}
which is clearly equivalent to the inequality in item \emph{\ref{main02.seg}}.

\mgap

\emph{\ref{main02.ter}} From \eqref{eq:wood04} and \eqref{eq:sshk} below, for all $k\geq 0$,
\begin{align*}
\norm{x_{k+1} - x_0} &= \norm{P_{H_k\cap W_k}(x_0) - x_0}\\
 &\leq \norm{P_{\mathcal{S}}(x_0) - x_0}\\
 &= d_0,
\end{align*}
where $P_{\mathcal{S}}$ denotes the projection onto $\mathcal{S}$. Note that the desired inequality is trivial when $k = 0$.

\mgap

\emph{\ref{main02.qua}} Since $x_{k+1} \in H_k$, using the definition of $P_{H_k}(\widetilde{w}_k)$ we find 
\begin{align*}
\norm{x_{k+1}- \widetilde{w}_k} \geq \norm{P_{H_k}(\widetilde{w}_k) - \widetilde{w}_k}.
\end{align*}
 On the other hand, from Lemma \ref{lm:olimp} and the fact that $(y_k, v_k, \varepsilon_k)$ is an $\sigma$-approximate solution at $(\widetilde w_k, \lambda_k)$ -- see \eqref{eq:wood}--, it follows that
\begin{align*}
\norm{P_{H_k}(\widetilde{w}_k) - \widetilde{w}_k} \geq \frac{1-\sigma^2}{2} \max \{\norm{\lambda_k v_k}, \norm{\widetilde{w}_k - y_k}\},
\end{align*}
which, in turn, finishes the proof of the proposition.
\end{proof}


From now on in this paper we assume that the following conditions hold on the sequences $\set{\alpha_k}$ 
and $\set{\beta_k}$ given in Algorithm \ref{alg:main}.

\bassu \lab{ass:ab}
\begin{enumerate}[label = \emph{(\alph*)}]
\item \lab{ab.seg} The sequence $\set{\alpha_k}$ is bounded, i.e.,
\[
 \overline{\alpha} \coloneqq \sup_{k\geq 0}\, \alpha_k < +\infty.
\]
\item The sequence $\set{\beta_k}$ belongs to $\ell_2$, i.e.,
\[
  \overline s \coloneqq \sum_{k=0}^\infty\,\beta^2_k < +\infty.
\]
We will also use the notation
\[
 \overline{\beta} \coloneqq \sup_{k\geq 0}\, \beta_k < +\infty.
\]
\end{enumerate}
\eassu

\mgap

\begin{remark}\lab{rm:ab}
\emph{
We emphasize that even if we set $\beta_k \equiv 0$, in which case $\overline s = \overline \beta = 0$, the inertial effect produced by the sequence $\set{\alpha_k}$ on Algorithm \ref{alg:main} is still in consonance with the typical extrapolation introduced in the current literature on inertial-type algorithms. More than that,  Assumption \ref{ass:ab}\emph{\ref{ab.seg}}, on the boundedness of the sequence $\set{\alpha_k}$, represents a much more flexible assumption when compared to what is usually imposed --  in the seminal paper \cite{AA01}, for instance, it is assumed that $\overline \alpha < 1/3$. See also \cite[Section 2]{AM20} for an additional discussion. This latter fact makes the results of this paper of interest even in the finite-dimensional setting, where of course weak and strong convergence coincide.
}
\end{remark}

\mgap

\bprop \label{pr:main}
Consider the sequences evolved by \emph{Algorithm \ref{alg:main}}, let $d_0$ denote the distance of $x_0$
to the solution set $\mathcal{S} \coloneqq T^{-1}(0)\neq \emptyset$ of \eqref{eq:probm} and let $\overline \alpha$, $\overline \beta$ and $\overline s$ be as in \emph{Assumption \ref{ass:ab}}. Then
\begin{enumerate}[label = \emph{(\alph*)}]
\item \lab{main.seg}  we have
\[
\sum_{k=0}^\infty\,\norm{x_{k+1}-x_k}^2\leq d_0^2.
\]
\item \lab{main.ter} We have
\[
\sum_{k=0}^\infty\,\norm{x_{k+1} - \widetilde w_k}^2 \leq 
(1 + \overline \alpha)\Big[ (1 + \overline \beta)\big[ 1 + \overline \alpha (1 + \overline \beta) \big] + \overline s \Big] d_0^2.
\]
\item \lab{main.qua} We have
\[
 x_k - y_k \to 0.
\]
\end{enumerate}
\eprop
\begin{proof}
 \emph{\ref{main.seg}} The result follows directly from items \emph{\ref{main02.seg}} and \emph{\ref{main02.ter}} of Proposition \ref{pr:main02} combined with a telescopic sum argument.

\mgap

\emph{\ref{main02.ter}} We claim that
\begin{align}\label{eq:key}
 \norm{w_{k}-z}^2=(1+\alpha_{k})\norm{x_{k}-z}^2
+\alpha_{k}(1+\alpha_{k})\norm{x_{k}-x_{k-1}}^2 -\alpha_{k}\norm{x_{k-1}-z}^2\quad \mbox{for all}\; z\in \Ha.
\end{align}
Indeed, from the definition of $w_k$ as in \eqref{eq:defw}, we have
\[
  x_{k} - z = \frac{1}{1+ \alpha_{k}}(w_{k} - z) + \frac{\alpha_{k}}{1+\alpha_{k}}(x_{k-1} - z)
\]
 and $w_{k}-x_{k-1}=(1+\alpha_{k})(x_{k}-x_{k-1})$, which
combined with  Lemma \ref{lm:imp01}(a) in Appendix \ref{app:aux} and some simple algebra yields \eqref{eq:key}.

 Note now that using  Lemma \ref{lm:imp01}(b) and \eqref{eq:defw}, we have
\begin{align*}
\|x_{k+1} - \tiw_k\|^2& = \|x_{k+1} - w_k - \beta_k(w_k - x_0)\|^2  \notag \\
&= \|x_{k+1} - w_k\|^2 + \beta_k^2\|w_k - x_0\|^2 + 2\beta_k \langle w_k - x_{k+1},  w_k -x_0 \rangle \notag \\
&= (1 + \beta_k)\|w_k - x_{k+1}\|^2 + \beta_k(1+ \beta_k)\|w_k - x_0\|^2 - \beta_k \|x_{k+1} - x_0\|^2.
\end{align*}
Using \eqref{eq:key} (with $z = x_{k+1}$ and $z = x_0$ ) in the above identity, we find
\begin{align}\label{eq:RT}
\|x_{k+1} - \tiw_k\|^2 = &(1 +\beta_k)\left[(1+\alpha_{k})\|x_{k}-x_{k+1}\|^2
+\alpha_{k}(1+\alpha_{k})\|x_{k}-x_{k-1}\|^2 -\alpha_{k}\|x_{k-1}-x_{k+1}\|^2\right]\notag\\
&+\beta_k(1 +\beta_k)\left[(1+\alpha_{k})\|x_{k}-x_0\|^2
+\alpha_{k}(1+\alpha_{k})\|x_{k}-x_{k-1}\|^2 -\alpha_{k}\|x_{k-1}-x_0\|^2\right]\notag\\
& -\beta_k \|x_{k+1} - x_0\|^2\notag\\
=& (1 +\beta_k)(1 +\alpha_k)\Big[\|x_k - x_{k+1}\|^2+ \alpha_k(1+\beta_{k})\|x_{k}-x_{k-1}\|^2
+\beta_{k} \|x_{k}-x_0\|^2 \Big]\notag\\
&- \Big[\alpha_k(1 +\beta_k)\left(\|x_{k-1} - x_{k+1}\|^2+ \beta_{k}\|x_{k-1}-x_0\|^2\right) + \beta_{k} \|x_{k+1}-x_0\|^2  \Big].
\end{align}
On the other hand, since $\|x_{k+1}-x_{k-1}\|+ \|x_{k-1}-x_0\| \geq \|x_{k+1}- x_0\|$, 
using Lemma \ref{lm:min} below, we get
\begin{align}
    \|x_{k+1}-x_{k-1}\|^2+ \beta_{k}\|x_{k-1}-x_0\|^2 \geq \frac{\beta_k}{1 +\beta_k}\|x_{k+1} -x_0\|^2,\notag
\end{align}
which in turn implies
\begin{align}\label{eq:RT0}
    \alpha_k(1 +\beta_k)\left(\|x_{k+1}-x_{k-1}\|^2+ \beta_{k}\|x_{k-1}-x_0\|^2\right) + \beta_{k} \|x_{k+1}-x_0\|^2 \geq \beta_k(1+\alpha_{k}) \|x_{k+1}-x_0\|^2.
\end{align}

Combining \eqref{eq:RT} and \eqref{eq:RT0}, and using the fact that 
$\|x_k - x_0\|\leq \|x_{k+1} -x_0\|$ -- see Proposition \ref{pr:main02}(a) --, we get
\begin{align*}
\|x_{k+1} - \tiw_k\|^2 \leq & (1 +\alpha_k)\bigg[(1 +\beta_k)\big[\|x_{k+1}-x_{k}\|^2+ \alpha_k(1+\beta_{k})\|x_{k}-x_{k-1}\|^2
\big] + \beta_{k}^2 \|x_{k+1}-x_0\|^2\bigg],
\end{align*}
which when combined with item (a) and Proposition \ref{pr:main02}(b), and Assumption \ref{ass:ab}, yields the desired result.

\mgap

\emph{\ref{main02.qua}}  Note first that 
\begin{align*}
  \norm{x_k - y_k} & \leq \norm{x_k - \widetilde{w}_k} + \norm{\widetilde{w}_k - y_k}\\[2mm]
   & \leq \norm{x_{k}- x_{k+1}} + \norm{x_{k+1}- \widetilde{w}_k} + \norm{\widetilde{w}_k - y_k}.
\end{align*}
The desired result now follows from items (a) and (b) and Proposition \ref{pr:main02}(c).
 \end{proof}

\mgap

\bcoro\label{cor:pacum}
Let $\{x_k\}$ be generated by \emph{Algorithm \ref{alg:main}} and assume that there exists $\underline{\lambda} > 0$ such that
\begin{align}\lab{eq:bawfz}
 \lambda_k \geq \underline{\lambda} \quad \mbox{for all}\quad  k\geq 0.
\end{align}
Then the weak cluster points of $\{x_k\}$ belong to $\mathcal{S} := T^{-1}(0)\neq \emptyset$, i.e., they
are solutions of \eqref{eq:probm}.
 \ecoro
 \begin{proof}
Note first that Proposition \ref{pr:main02}(b) implies that $\set{x_k}$ is bounded. From Proposition \ref{pr:main}(b),
\begin{align}\lab{eq:xtwz}
 x_{k+1} - \widetilde w_k \rightarrow 0,
\end{align}
which, in turn, when combined with  \eqref{eq:inv} yields 
 \begin{align*}
  \lambda_k v_k \rightarrow 0 \quad \mbox{and}\quad   \widetilde{w}_k - y_k \rightarrow 0.
 \end{align*}
Consequently, in view of the inequality in \eqref{eq:wood} and
the assumption $\lambda_k\geq \underline{\lambda} > 0$
for all $k\geq 0$, we also have
\begin{align}\lab{eq:etzero}
v_k \to 0\quad \mbox{and}\quad \varepsilon_k \rightarrow 0.    
\end{align}
Let $\{x_{k_j}\}$ be a subsequence of $\set{x_k}$ such that  $x_{k_j} \wto \overline{x}$, where $\overline{x}\in \Ha$. Using Proposition \ref{pr:main}(c) we then obtain 
\begin{align*}
   y_{k_j} \wto \overline{x}.
\end{align*}
On the other hand, from \eqref{eq:etzero}, $v_{k_j} \to 0$ and $\varepsilon_{k_j} \rightarrow 0$.
Altogether, we have $v_{k_j} \in T^{\varepsilon_{k_j}}(y_{k_j})$, $v_{k_j} \to 0$, $y_{k_j} \wto \overline{x}$  and  $\varepsilon_{k_j} \to 0$, which combined with Proposition \ref{pr:teps}\emph{\ref{teps.sab}} yields
\begin{align*}
0 \in T(\overline{x}),\;\;\mbox{i.e.},\;\; \overline x\in \mathcal{S}.  
\end{align*}
Therefore, all weak cluster points of $\{x_k\}$ belong to $\mathcal{S}$, which concludes the proof of the corollary.
\end{proof}
 
Next we will prove the strong convergence of any sequence $\{x_k\}$ generated by Algorithm \ref{alg:main}.

\btheo[Strong convergence of Algorithm \ref{alg:main}] \lab{th:sconv}
Let $\{x_k\}$, $\set{y_k}$ and $\{\lambda_k\}$ be generated by \emph{Algorithm 1} and assume that
$\{\lambda_k\}$ is bounded away from zero, i.e., that condition \eqref{eq:bawfz} holds.
Suppose also that \emph{Assumption \ref{ass:ab}} holds on the sequences $\set{\alpha_k}$ and $\set{\beta_k}$.
Then $\{x_k\}$ and $\set{y_k}$ converge strongly to $x^\ast \coloneqq P_{\mathcal{S}}(x_0)$.
\etheo
\begin{proof}
In view of  Proposition \ref{pr:main02}(b) and the fact that $d_0 = \norm{x^* - x_0}$, we have
\begin{align}\label{eq:xkb}
       \norm{x_k -x_0} \leq \norm{x^{\ast} - x_0} \quad \mbox{for all}\quad k\geq 0.
\end{align}
  Then $\{x_k\}$ is bounded and, by Corollary \ref{cor:pacum}, its weak cluster points belong to $\mathcal{S}$. 
  Let $\{x_{k_j}\}$ be a subsequence of $\{x_k\}$ and $\overline{x} \in \mathcal{S}$ be such that
 \begin{align*}
   x_{k_j} \rightharpoonup \overline{x}.
  \end{align*}
Since $\norm{\cdot}$ is lower semicontinuous in the weak topology, we obtain
\begin{align}\label{eq:ast}
    \norm{\overline{x} - x_0} \leq \liminf \norm{x_{k_j} - x_0} \leq \limsup \norm{x_{k_j} - x_0} \leq \norm{x^{\ast} -x_0},
\end{align}
where in the latter inequality we also used \eqref{eq:xkb}.
 Since $x^{\ast} = P_{\mathcal{S}}(x_0)$ and $\overline{x} \in \mathcal{S}$, as a direct consequence of \eqref{eq:ast}, we obtain $\overline{x} = x^{\ast}$. So, $x_k \wto x^*$, and from \eqref{eq:xkb},
\begin{align*}
\norm{x^* - x_0} \leq \liminf \norm{x_k - x_0} \leq \limsup \norm{x_k - x_0} \leq \norm{x^{\ast} - x_0}
\end{align*}
and so $\norm{x_k - x_0} \to  \norm{x^{\ast} - x_0}$. Since $x_k - x_0 \wto x^* - x_0$, by 
Lemma \ref{lm:imp01}\emph{\ref{imp01:qua}} below, we then obtain
$x_k - x_0 \to x^* - x_0$, which is of course equivalent to $x_k \to x^*$.

The fact that $y_k \to x^*$ follows from the fact that $x_k \to x^*$ and Proposition \ref{pr:main}(c).
\end{proof}

\subsection{Iteration-complexity analysis} \lab{subsec:ic}
In this subsection, we study the (pointwise) iteration-complexity of Algorithm \ref{alg:main}.
The main result is Theorem \ref{th:pcom} below.
Motivated by the complexity analyzes of the HPE method presented in \cite{MS10}, we study the iteration-complexity of Algorithm \ref{alg:main} in the following sense: for a given tolerance $\rho > 0$, we wish to estimate the number of iterations needed to compute a triple $(y, v, \varepsilon)\in \Ha\times \Ha\times \R_+$ such that 
\begin{align} \lab{eq:apros}
v\in T^\varepsilon(y)\;\;\mbox{and}\;\;\max\{\norm{v}, \varepsilon\}\leq \rho.
\end{align}
In this case, $y$ is considered to be a $\rho$-approximate solution of \eqref{eq:probm}. Note that if we set 
$\rho = 0$ in \eqref{eq:apros}, then $v = 0$ and $\varepsilon = 0$, in which case $0 \in T^0(y)$, i.e, 
$0 \in T(y)$ (in other words $y$ is a solution of \eqref{eq:probm}).

\mgap

Next is the main theorem regarding the iteration-complexity of Algorithm \ref{alg:main}.

\btheo[Pointwise iteration-complexity of Algorithm \ref{alg:main}] \lab{th:pcom}
Consider the sequences evolved by \emph{Algorithm 1} and assume that $\set{\lambda_k}$ is bounded away from zero, i.e., assume that condition \eqref{eq:bawfz} holds. Let $d_0$ denote the distance of $x_0$ to the solution set $\mathcal{S} \coloneqq T^{-1}(0)\neq \emptyset$ of \eqref{eq:probm} and suppose
\emph{Assumption \ref{ass:ab}} holds. Then, for every $k \geq 0$, there exists $j \in [k]$ such that
\begin{subnumcases}{ }
 v_j \in T^{\varepsilon_j}(y_j), & \lab{pervj} \\[3mm]
\norm{v_j} \leq \dfrac{2d_0}{\sqrt{k+1}}\left( \frac{\sqrt{(1 + \overline \alpha)\Big[(1 + \overline \beta)
\big[ 1 + \overline \alpha (1 + \overline \beta) \big] + \overline s\Big]}}{(1- \sigma ^2)\underline{\lambda}}\right), & \label{des:v_jcom}
\\[3mm]
\varepsilon_j \leq \frac{d_0^2}{k+1}\left(\frac{4 \sigma^2 (1 + \overline \alpha)\Big[(1 + \overline \beta)
\big[ 1 + \overline \alpha (1 + \overline \beta) \big] + \overline s\Big]}{ (1 -\sigma^2)^2 \underline{\lambda}}\right). & \label{des:e_jcom}
\end{subnumcases}
\etheo
\begin{proof}
Using Proposition \ref{pr:main02}(c) and the inequality in \eqref{eq:wood}, we find for all $i \in [k]$,
\begin{align*}
\max\left\{\normq{\lambda_i v_i}, 
\frac{\lambda_i \varepsilon_i}{\sigma^2} \right\}\leq 
\dfrac{4}{(1-\sigma^2)^2}\normq{x_{i+1} - \widetilde w_i}.
\end{align*}
Hence, from Proposition \ref{pr:main}(b),
\begin{align*}
\sum_{i=0}^{k}\,\max\left\{\normq{\lambda_i v_i}, 
\frac{\lambda_i \varepsilon_i}{\sigma^2} \right\}\leq
\dfrac{4(1 + \overline \alpha)\Big[(1 + \overline \beta)\big[ 1 + \overline \alpha (1 + \overline \beta) \big] + 
\overline s\Big] d_0^2}{(1-\sigma^2)^2}
\end{align*}
and so there exists $j\in [k]$ such that
\begin{align*}
(k+1)\,\left(\max\left\{\normq{\lambda_j v_j}, 
\frac{\lambda_j \varepsilon_j}{\sigma^2} \right\}\right)\leq
\dfrac{4(1 + \overline \alpha)\Big[(1 + \overline \beta)\big[ 1 + \overline \alpha (1 + \overline \beta) \big] + 
\overline s\Big] d_0^2}{(1-\sigma^2)^2},
\end{align*}
which combined with \eqref{eq:bawfz} yields \eqref{des:v_jcom} and \eqref{des:e_jcom}. To finish the proof of the theorem, note that
\eqref{pervj} follows directly from the inclusion in \eqref{eq:wood}.
\end{proof}

We now make some remarks regarding Theorem \ref{th:pcom}.
\begin{enumerate}[label = (\roman*)]
\item If we set $\beta_k\equiv 0$ in Algorithm \ref{alg:main} -- see Remark \ref{rm:ab} -- then it follows that 
$\overline \beta = \overline s = 0$ (see Assumption \ref{ass:ab}) and consequently \eqref{pervj} -- \eqref{des:e_jcom}
reduce to
\begin{align}
 \lab{eq:veloso}
 v_j\in T^{\varepsilon_j}(y_j),\quad \norm{v_j}\leq \dfrac{2(1 + \overline \alpha)d_0}{\sqrt{k + 1}\,(1-\sigma^2)\underline \lambda},
\quad \varepsilon_j\leq \dfrac{4\sigma^2(1 + \overline \alpha)^2 d_0^2}{(k + 1)\,(1-\sigma^2)^2\underline \lambda}.
\end{align}
Here we emphasize that the bounds on $v_j$ and $\varepsilon_j$ given in \eqref{eq:veloso} are much simpler than the corresponding ones obtained in \cite[Theorem 2.7]{AM20}, in a previous attempt to study the iteration-complexity of relative-error inexact PP methods with inertial effects. This improvement was achieved essentially at the cost of computing an extra projection onto the intersection of two half-spaces (see the definition of $x_{k+1}$ as in \eqref{eq:wood04}), which, as mentioned in the fourth remark
following Algorithm \ref{alg:main}, reduces to solving (at most) a $2\times 2$ linear system of equations.
Furthermore, as we also pointed out in Remark \ref{rm:ab}, the assumption that $\set{\alpha_k}$ is bounded 
(see Assumption \ref{ass:ab}) is considerably weaker than what is usually required in the analysis of inertial-type PP algorithms (in particular in~\cite{AM20}).  
\item For simplicity, let us consider \eqref{eq:veloso}. In this case, Theorem \ref{th:pcom} guarantees that, for a given tolerance $\rho > 0$, Algorithm \ref{alg:main} finds a triple
$(y, v, \varepsilon)$ satisfying \eqref{eq:apros} in at most
\begin{align}
O\left(\left\lceil\left(\dfrac{1 + \overline \alpha}{1- \sigma^2}\right)^2\max\left\{\left(\dfrac{d_0}{\underline \lambda \rho}\right)^2, 
\dfrac{\sigma^2  d_0^2}{\underline \lambda \rho}\right\}\right\rceil\right)
\end{align}
iterations.
\end{enumerate}

\section{A strongly convergent inertial variant of the Korpolevich extragradient method}
 \lab{sec:korp}
In this section, we consider the VIP \eqref{eq:prob04}, i.e., the problem of finding $x\in C$ such that  
\begin{align} \lab{eq:prob04m}
\inner{F(x)}{y - x}\geq 0 \quad \mbox{for all} \quad y\in C,
\end{align}
where the following assumptions are assumed to hold:
\begin{itemize}
\item[B1.] $C \subset \Dom F$ is a nonempty closed and convex subset of $\Ha$.
\item[B2.] $F \colon \Dom F\subset \Ha \to \Ha$ is monotone and $L$-Lipschitz continuous, i.e., $F$ is monotone (on $\Dom F$) and there exists $L > 0$ such that
 \begin{align} \label{eq:f.Lip}
  \norm{F(x) - F(y)}\leq L\norm{x - y} \quad \mbox{for all} \quad x,y\in \Dom F.
 \end{align}
\item[B3.] The solution set $\mathcal{S} \coloneqq (F + N_C)^{-1}(0)$ of \eqref{eq:prob04m} is nonempty.
\end{itemize}
Note that under the above assumptions on $C$ and $F(\cdot)$ the operator $F + N_C$ is maximal monotone~\cite[Proposition 12.3.6]{FPII03}. 
Moreover, as we discussed in the introduction, problem \eqref{eq:prob04m} is equivalent to the monotone inclusion $0\in T(x)$ with
$T = F + N_C$; this fact allow us to apply the results of Section \ref{sec:main} to \eqref{eq:prob04m}. 

\mgap

For numerically solving \eqref{eq:prob04m}, we propose and study a strongly convergent inertial variant of the celebrated Korpolevich extragradient method~\cite{Kor76} (see Algorithm \ref{alg:korp}). The main results regarding strong convergence and iteration-complexity are summarized in Theorem \ref{th:korp} below.

\mgap
\mgap

\begin{algorithm}[H] \label{alg:korp}
\caption{A strongly convergent inertial variant of the Korpolevich extragradient method for solving \eqref{eq:prob04m}}
\SetAlgoLined
\KwInput{$x_0 = x_{-1} \in \Ha$, $\sigma \in (0,1)$ and $\lambda = \sigma/L$}
\For{$k = 0, 1, 2, \dots$}{
  Choose $\alpha_k, \beta_k \geq 0$ and set
\begin{align} \lab{eq:defw_korp}
\begin{aligned}
w_k &= x_k + \alpha_k (x_k - x_{k-1}),\\[2mm]
\widetilde{w}_k &= w_k + \beta_k (w_k - x_0).
\end{aligned}
\end{align}
\\
Let $w'_k = P_C(\widetilde w_k)$  and compute 
\begin{align} \lab{eq:yty}
\begin{aligned}
&y_k = P_{C}\left(\tiw_k - \lambda F(w'_k)\right),\\[2mm]
&\widetilde y_k = P_{C} \left(\tiw_k -\lambda F(y_k)\right).
\end{aligned}
\end{align}
\\
Define
\begin{align} \lab{eq:HW_korp}
\begin{aligned}
 H_k &= \{z \mid \langle z-y_k,v_k \rangle \leq \varepsilon_k\},\\[2mm]
 W_k &= \{z \mid \langle z-x_k,x_0 - x_k \rangle \leq 0\},
\end{aligned}
\end{align}
where 
\begin{align}
  \lab{eq:qve_korp}
\begin{aligned}
&q_k = \frac{\tiw_k - \widetilde y_k}{\lambda} - F(y_k),\\[1mm]
&v_k = F(y_k) + q_k,\\[1mm]
&\varepsilon_k = \inner{q_k}{\widetilde y_k - y_k}.
\end{aligned}
\end{align}
\\
Set
\begin{align}\lab{eq:P_korp}
 x_{k+1} = P_{H_k \cap W_k}(x_0).
\end{align}
  }
\end{algorithm}

\mgap
\mgap

\noindent
Next we make some remarks regarding Algorithm \ref{alg:korp}.
\begin{enumerate}[label = (\roman*)]
\item Here the role of the (inertial) parameters $\alpha_k$ and $\beta_k$ as in \eqref{eq:defw_korp} 
is the same as in the algorithm of the previous section -- see  the first remark following Algorithm \ref{alg:main}. Analogously to the previous section, to study the strong convergence and iteration-complexity of Algorithm \ref{alg:korp}, 
we will also assume the same conditions on both $\set{\alpha_k}$ and $\set{\beta_k}$ as in Assumption \ref{ass:ab}.
\item The crucial step in Algorithm \ref{alg:korp} is the computation of the projections as in \eqref{eq:yty}. This is the same mechanism of the famous Korpolevich's extragradient method: $y_k$ is a ``projected gradient'' step at $\widetilde w_k$ in the direction of $-F(w'_k)$, while $\widetilde y_k$ is a correction (extragradient) step. 
The extra projection to compute $w'_k = P_C(\widetilde w_k)$ is needed to recover feasibility for $\widetilde w_k$ with respect to $C$ (otherwise it could be impossible to evaluate $F(\cdot)$ in the case where $\widetilde w_k$ doesn't belong to $\Dom F$). We mention that if $F(\cdot)$ is defined in the whole space $\Ha$, then one can simply take $w'_k = \widetilde w_k$ and no extra projection would be required in this case. 
\item The half-spaces $H_k$ and $W_k$ as in \eqref{eq:HW_korp} are defined in the same way as in Algorithm \ref{alg:main} --
see \eqref{eq:wood02}. Note also that the computation of $q_k$, $v_k$ and $\varepsilon_k$ as in \eqref{eq:qve_korp} doesn't require extra evaluations of the operator $F(\cdot)$. Also, the computation of $x_{k+1}$ given in \eqref{eq:P_korp} was discussed in the fourth remark following Algorithm \ref{alg:main}.
\item Since Algorithm \ref{alg:korp} is a special instance of 
Algorithm \ref{alg:main} (see Proposition \ref{pr:korp_sp} below), it follows from Corollary \ref{cor:2} in Appendix \ref{subsec:wd} that Algorithm \ref{alg:korp} is also well-defined; see also the fourth remark following Algorithm \ref{alg:main} and Proposition \ref{pr:korp_sp} below.
\end{enumerate}

\mgap

The proof of the next proposition follows the same outline of \cite[Theorem 5.1]{MS10}. For the sake of completeness, we include a proof here.

\bprop \lab{pr:korp_sp}
For the sequences evolved by \emph{Algorithm \ref{alg:korp}}, the following holds for all $k\geq 0$:
 \begin{align} \label{eq:kp01}
   \begin{aligned}
   \begin{cases}
    v_k\in (F+N_C^{\,\varepsilon_k})(y_k) \subset (F + N_C)^{\varepsilon_k}(y_k),\\[2mm]
    \norm{\lambda v_k+y_k - \tiw_{k}}^2 + 2\lambda \varepsilon_k\leq \sigma^2 \norm{y_k-\tiw_{k}}^2.
\end{cases}
 \end{aligned}
\end{align}
As a consequence, it follows that \emph{Algorithm \ref{alg:korp}} is a special instance of 
\emph{Algorithm \ref{alg:main}} \emph{(}with $\lambda_k\equiv \lambda$\emph{)} for solving \eqref{eq:probm} with 
$T = F + N_{C}$.
\eprop
\begin{proof}
Direct use of the second identity in \eqref{eq:yty} and the fact that $P_C = (\lambda N_C + I)^{-1}$ give
$q_k \in N_C(\widetilde y_k) = \partial \delta_C(\widetilde y_k)$, where $q_k$ is as in \eqref{eq:qve_korp}, and so by Proposition \ref{pr:teps}(g) and the fact that $\partial_{\varepsilon_k}\delta_C = N_C^{\,\varepsilon_k}$, we find
\begin{align*}
 q_k \in \partial_{\varepsilon_k} \delta_C(y_k) = N_C^{\,\varepsilon_k}(y_k)
\end{align*}
where $\varepsilon_k$ is as in \eqref{eq:qve_korp}. From the latter inclusion and the definition of $v_k$ as in 
\eqref{eq:qve_korp} we obtain the first inclusion in \eqref{eq:kp01}. The second inclusion follows from the first one and
Proposition \ref{pr:teps}(b). Let's now prove the inequality in \eqref{eq:kp01}.

Note first that in view of the first identity in \eqref{eq:yty}, 
\begin{align}\lab{eq:kp01x}
 p_k \coloneqq \dfrac{\widetilde w_k - y_k}{\lambda} - F(w'_k)\in N_C(y_k),
\end{align}
which in turn combined with the fact that $\widetilde y_k\in C$ and the definition of $N_C(y_k)$
yields $\inner{p_k}{\widetilde y_k - y_k}\leq 0$. Thus,
\begin{align}\label{eq:e_k}
\varepsilon_k  =  \inner{q_k}{\widetilde y_k - y_k} =  \inner{q_k-p_k}{\widetilde y_k-y_k} + 
\inner{p_k}{\widetilde y_k-y_k} \leq \inner{q_k-p_k}{\widetilde y_k - y_k}
\end{align}
and so
\begin{align*}
 \norm{\lambda v_k + y_k - \tiw_k}^2 + 2 \lambda   \varepsilon_k  
 &= \norm{y_k - \widetilde y_k}^2 + 2 \lambda \varepsilon_k  && \mbox{[\,by \eqref{eq:qve_korp}\,]}\\
 &\leq \norm{y_k - \widetilde y_k}^2 + 2 \lambda \inner{q_k - p_k}{\widetilde y_k - y_k} && \mbox{[\,by \eqref{eq:e_k}\,]}\\
 &= \norm{y_k - \widetilde y_k - \lambda(q_k - p_k)}^2 - \norm{\lambda(q_k-p_k)}^2 \\
 &\leq \norm{y_k - \widetilde y_k - \lambda (q_k - p_k)}^2 \\
 &= \norm{\lambda\left( F(y_k) - F(w'_k)\right)}^2 && \mbox{[\,by \eqref{eq:qve_korp} and \eqref{eq:kp01x}\,]}\\
 &\leq (\lambda L)^2 \norm{y_k - w'_k}^2 &&  \mbox{[\,by \eqref{eq:f.Lip}\,]}\\
 &\leq (\lambda L)^2 \norm{y_k - \widetilde w_k}^2 \\
 &= \sigma^2 \norm{y_k - \widetilde w_k}^2 && \mbox{[\,by the fact that $\lambda = \sigma/L$\,].}
\end{align*}

Finally, the last statement of the proposition follows from \eqref{eq:kp01} and the definitions of Algorithms \ref{alg:main}
and \ref{alg:korp}.
\end{proof}

Next we summarize our main findings on Algorithm \ref{alg:korp}.

\btheo[{Strong convergence and iteration-complexity of Algorithm \ref{alg:korp}}] \lab{th:korp}
Suppose assumptions \emph{B1}, \emph{B2} and \emph{B3} as above hold on $C$ and $F(\cdot)$ and consider the sequences evolved by \emph{Algorithm \ref{alg:korp}}. Let $d_0$ denote the distance of $x_0$ to the solution set  $\mathcal{S}\neq \emptyset$ of \eqref{eq:prob04m} and suppose \emph{Assumption \ref{ass:ab}} holds on the sequences $\set{\alpha_k}$ and $\set{\beta_k}$. 
Then the following statements hold:
\begin{enumerate}[label = \emph{(\alph*)}]
\item The sequences $\{x_k\}$ and $\set{y_k}$ converge strongly to $x^\ast \coloneqq P_{\mathcal S}(x_0)$.
\item  For all $k\geq 0$, there exists $j\in [k]$ such that
\begin{subnumcases}{ }
v_j\in (F + N_C^{\,\varepsilon_j})(y_j), & \label{pervj02}\\[3mm]
\norm{v_j} \leq \dfrac{2 d_0 L}{\sqrt{k+1}}\left( \frac{\sqrt{(1 + \overline \alpha)\left[(1 + \overline \beta)\left( 1 + \overline \alpha (1 + \overline \beta) \right) + \overline s\right]}}{(1- \sigma ^2)\sigma}\right), & \label{des:vjc2} \\[3mm]
\varepsilon_j \leq \frac{d_0^2 L}{k+1}\left(\frac{4 \sigma (1 + \overline \alpha)\left[(1 + \overline \beta)\left( 1 + \overline \alpha (1 + \overline \beta) \right) + \overline s\right]}{ (1 -\sigma^2)^2}\right).& \label{des:ejc2} 
\end{subnumcases}
\end{enumerate}
\etheo
\begin{proof}
By Proposition \ref{pr:korp_sp}, we know that Algorithm \ref{alg:korp} is a special instance of Algorithm \ref{alg:main} for solving
\eqref{eq:probm} with $T = F + N_C$. Hence, we just have to call Theorems \ref{th:sconv} and \ref{th:pcom}, use the fact that
$\lambda_k\equiv \lambda = \sigma/L$ (see the input in Algorithm \ref{alg:korp}) and recall the VIP
\eqref{eq:prob04m} is equivalent to the inclusion $0\in (T + N_C)(x)$.
\end{proof}

\mgap

\begin{remark} \lab{rm:chris}
\emph{
The bounds in item (b) above for $\norm{v_j}$ and $\varepsilon_j$ are similar to the corresponding ones found by Monteiro and Svaiter in~\cite[Theorem 5.2(a)]{MS10} for the Korpolevich extragradient method (see also \cite{Nem05}). Similarly to the first remark following Theorem \ref{th:pcom}, if we set $\beta_k\equiv 0$ in Algorithm \ref{alg:korp}, then 
\eqref{pervj02} -- \eqref{des:ejc2} reduce to
\begin{align*}
 v_j\in (F+N_C^{\varepsilon_j})(y_j),\quad \norm{v_j}\leq \dfrac{2(1 + \overline \alpha)d_0 L}{\sqrt{k + 1}\,(1-\sigma^2)\sigma},
\quad \varepsilon_j\leq \dfrac{4\sigma (1 + \overline \alpha)^2 d_0^2 L}{(k + 1)\,(1-\sigma^2)^2}.
\end{align*}
The latter conditions imply that for a given tolerance $\rho > 0$, Algorithm \ref{alg:korp} finds a triple
$(y, v, \varepsilon)$ such that $v\in (F + N_C^\varepsilon)(y)$ and $\max\{\norm{v}, \varepsilon\}\leq \rho$ in at most
\begin{align}
O\left(\left\lceil\left(\dfrac{1 + \overline \alpha}{1- \sigma^2}\right)^2
\max\left\{\left(\dfrac{d_0 L}{\sigma\rho}\right)^2, 
\dfrac{d_0^2 L}{\rho}\right\}\right\rceil\right)
\end{align}
iterations. Moreover, we also mention that the inclusion \eqref{pervj02} is closely related to the more usual notions of \emph{weak and strong solutions} of VIPs (see, e.g., \cite{MS10} for a discussion).
}
\end{remark}

\section{Strongly convergent inertial variants of Tseng's modified forward-backward and forward-backward methods} \lab{sec:tseng}
In this section, we consider the structured monotone inclusion problem \eqref{eq:prob02}, i.e., we consider the problem of finding $x\in \Ha$ such that
\begin{align} \lab{eq:mipt}
 0\in F(x) + B(x)
\end{align}
where the following conditions are assumed to hold:
\begin{itemize}
\item[C1.] $F \colon \Dom F\subset \Ha\to \Ha$ is a (single-valued) continuous monotone map.
\item[C2.] $B \colon \Ha\tos \Ha$ is a (set-valued) maximal monotone operator such that $\Dom B \subset C \subset \Dom F$,
where $C$ is a nonempty closed and convex subset of $\Ha$.
\item[C3.] The solution set $\mathcal{S} \coloneqq (F + B)^{-1}(0)$ of \eqref{eq:mipt} is nonempty.
\end{itemize} 
Conditions C1 and C2 above guarantee that the operator $T = F + B$ is maximal monotone 
(see~\cite[Proposition A.1]{MS11}), and so we can apply the results of Section \ref{sec:main} for solving \eqref{eq:mipt}.

Assuming that the resolvent $(\lambda B + I)^{-1}$ of $B(\cdot)$ is easy to evaluate, we propose two forward-backward type methods for numerically solving \eqref{eq:mipt}:
\begin{itemize}
\item A strongly convergent inertial variant of the Tseng's forward-backward method~\cite{Tse20} for the case that 
$F(\cdot)$ is $L$-Lipschitz continuous, that is, in the case where \eqref{eq:f.Lip} holds. The method is presented as 
Algorithm \ref{alg:tseng} below, and the main results on strong convergence and iteration-complexity are summarized in Theorem \ref{th:tseng}.
\item A modification of the forward-backward method for the case where 
$F(\cdot)$ is $(1/L)$-cocoercive on $\Ha$:
 \begin{align}  \label{eq:f.coco}
   \inner{x - y}{F(x) - F(y)}\geq \dfrac{1}{L}\norm{F(x) - F(y)}^2\quad \mbox{for all}\quad  x, y\in \Ha,
  \end{align}
 for some $L > 0$. The proposed algorithm appears as Algorithm \ref{alg:fb} below and the main results on convergence and complexity are summarized in Theorem \ref{th:fb.main}.
 \end{itemize}

\subsection{A strongly convergent inertial variant of the Tseng's forward-backward method} \label{subsec:tsg}

As we mentioned in the beginning of this section, in this subsection we consider problem \eqref{eq:mipt}
under the assumption that $F(\cdot)$, $B(\cdot)$ and/or $C$ satisfy assumptions C1, C2 and C3 and the $L$-Lipschitz conditions
\eqref{eq:f.Lip}.

\mgap

Next is the algorithm:

\mgap

\begin{algorithm}[H] \lab{alg:tseng}
\caption{A strongly convergent inertial variant of the Tseng's forward-backward method for solving \eqref{eq:mipt}}
\SetAlgoLined
\KwInput{$x_0 = x_{-1} \in \Ha$, $\sigma \in (0,1)$ and $\lambda = \sigma/L$}
\For{$k = 0, 1, 2, \dots$}{
Choose $\alpha_k, \beta_k \geq 0$ and set
\begin{align} \lab{eq:defw_Ts}
\begin{aligned}
w_k &= x_k + \alpha_k (x_k - x_{k-1}),\\[2mm]
\widetilde{w}_k &= w_k + \beta_k (w_k - x_0).
\end{aligned}
\end{align}
\\
Let $w'_{k} = P_C(\tiw_{k})$ and compute
\begin{align} \lab{eq:itse02}
\begin{aligned} 
 &y_k = (\lambda B+I)^{-1}\left(\tiw_{k} - \lambda F(w'_{k})\right),\\[2mm]
 & v_k = F(y_k) - F(w'_k) + \dfrac{1}{\lambda}\left(\tiw_{k} - y_k\right).
\end{aligned}
\end{align}
\\
Define
\begin{align} \lab{eq:HW_Ts}
\begin{aligned}
 H_k &= \{z \mid \langle z-y_k,v_k \rangle \leq 0\},\\[2mm]
 W_k &= \{z \mid \langle z-x_k,x_0 - x_k \rangle \leq 0\}.
\end{aligned}
\end{align}
\\
Set
\begin{align}\lab{eq:P_Ts}
 x_{k+1} = P_{H_k \cap W_k}(x_0).
\end{align}
}
\end{algorithm}

\mgap
\mgap

\noindent
We now make the following remarks regarding Algorithm \ref{alg:tseng}:
\begin{enumerate}[label = (\roman*)]
\item We refer the reader to the several comments following both Algorithms \ref{alg:main} and \ref{alg:korp} regarding
the role of the (inertial) sequences $\set{\alpha_k}$ and $\set{\beta_k}$ as well as of the half-spaces $H_k$ and $W_k$, 
and also regarding the well-definedness of Algorithm \ref{alg:tseng} with respect to the projection in \eqref{eq:P_Ts}.
\item From a numerical point of view, the most expensive operation in Algorithm \ref{alg:tseng} is the computation of the resolvent $(\lambda B + I)^{-1}$ of $B$ in \eqref{eq:itse02}; the computation of $y_k$ and $v_k$ as in \eqref{eq:itse02} resembles the  iteration of the Tseng's modified forward-backward method~\cite{Tse20}. We also mention that Algorithm \ref{alg:tseng}
can also be applied to solve the VIP \eqref{eq:prob04m} by taking $B = N_C$ in \eqref{eq:mipt}, in which case
$(\lambda B + I)^{-1} = P_C$. In this case, comparing Algorithms \ref{alg:korp} and \ref{alg:tseng}, it appears that the main advantage of the latter over the former is that \eqref{eq:itse02} requires the computation of one projection, namely
$y_k = P_C\left(\tiw_{k} - \lambda F(w'_{k})\right)$, while \eqref{eq:yty} requires two of them. Here we also mention that
the extra projection $w'_{k} = P_C(\tiw_{k})$ in Step 2 of Algorithm \ref{alg:tseng} is needed to recover feasibility of
$\widetilde w_k$ with respect to the domain $\Dom F$ of $F(\cdot)$; if $\Dom F = \Ha$, the one can simply take
$w'_{k} = \tiw_{k}$.
\item Other potential advantage of Algorithm \ref{alg:tseng} over Algorithm \ref{alg:korp} (when both are applied to solve
\eqref{eq:prob04m}) is the quality of the approximation: while for the former one obtains $(y_k, v_k)$ exactly in the graph of 
$F + N_C$, for the latter $(y_k, v_k)$ belongs to an enlargement of $F + N_C$ -- see the inclusions in \eqref{eq:kp01}
and \eqref{eq:ts02}. 
\end{enumerate}

\bprop \label{pr:temain}
For the sequences evolved by \emph{Algorithm \ref{alg:tseng}}, the following holds for all $k\geq 0$:
 \begin{align}\label{eq:ts02}
   \begin{aligned}
  \begin{cases}
   v_k\in (F + B)(y_k),\\[2mm]
   \norm{\lambda v_k+y_k-\tiw_{k}}\leq \sigma \norm{y_k-\tiw_{k}}.
\end{cases}
 \end{aligned}
\end{align}
As a consequence, by letting $\lambda_k\equiv \lambda$ and $\varepsilon_k \equiv 0$, it follows that \emph{Algorithm \ref{alg:tseng}} is a special instance of \emph{Algorithm \ref{alg:main}} for solving \eqref{eq:probm}  with $T = F+B$.
 \eprop
\begin{proof}
The proof of \eqref{eq:ts02} follows the same outline of the proof of \cite[Proposition 6.1]{MS10}. The fact that 
Algorithm \ref{alg:tseng} is a special instance of Algorithm \ref{alg:main} follows from \eqref{eq:ts02}, 
Algorithms \ref{alg:tseng} and \ref{alg:main}'s definitions and Proposition \ref{pr:teps}(d).
\end{proof}

Next is the main result on Algorithm \ref{alg:tseng}:

\begin{theorem}[Strong convergence and iteration-complexity of Algorithm \ref{alg:tseng}] \lab{th:tseng}
Suppose assumptions \emph{C1}, \emph{C2} and \emph{C3} and the $L$-Lipschitz condition \eqref{eq:f.Lip} as above hold. Consider the sequences evolved by \emph{Algorithm \ref{alg:tseng}} and let $d_0$ denote the distance of $x_0$ to the solution set  $\mathcal{S}\neq \emptyset$ of \eqref{eq:mipt}, and suppose \emph{Assumption \ref{ass:ab}} holds on the sequences $\set{\alpha_k}$ and $\set{\beta_k}$.  Then the following statements hold:
\begin{enumerate}[label = \emph{(\alph*)}]
\item The sequences $\{x_k\}$ and $\set{y_k}$ converge strongly to $x^\ast \coloneqq P_S(x_0)$.
\item For all $k\geq 0$, there exists $j\in [k]$ such that
\begin{subnumcases}{ }
v_j\in (F + B)(y_j), & \label{pervj03}\\[3mm]
\norm{v_j} \leq \dfrac{2 d_0 L}{\sqrt{k+1}}\left( \frac{\sqrt{(1 + \overline \alpha)\left[(1 + \overline \beta)\left( 1 + \overline \alpha (1 + \overline \beta) \right) + \overline s\right]}}{(1- \sigma ^2)\sigma}\right). & \label{des:vc03}
\end{subnumcases}
\end{enumerate}
\end{theorem}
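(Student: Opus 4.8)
The plan is to exploit the reduction already recorded in Proposition \ref{pr:temain}: Algorithm \ref{alg:tseng} is a special instance of Algorithm \ref{alg:main} applied to the operator $T = F + B$, with the constant stepsize $\lambda_k \equiv \lambda = \sigma/L$ and with $\varepsilon_k \equiv 0$. Assumptions C1 and C2 guarantee (via \cite{MS11}) that $T = F + B$ is maximal monotone, and C3 gives $\mathcal{S} = (F+B)^{-1}(0) \neq \emptyset$, so assumptions A1 and A2 of Section \ref{sec:main} hold. Moreover, the constant stepsize $\lambda = \sigma/L > 0$ trivially satisfies the lower-boundedness condition \eqref{eq:bawfz} with $\underline{\lambda} = \sigma/L$, and Assumption \ref{ass:ab} on $\set{\alpha_k}$ and $\set{\beta_k}$ is assumed. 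Thus every hypothesis of Theorems \ref{th:sconv} and \ref{th:pcom} is met, and the proof amounts to transcribing their conclusions into the present setting, exactly as was done for Algorithm \ref{alg:korp} in Theorem \ref{th:korp}.

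For part (a), I would simply invoke Theorem \ref{th:sconv}, which yields the strong convergence of both $\set{x_k}$ and $\set{y_k}$ to $x^\ast = P_{\mathcal{S}}(x_0)$. For part (b), I would apply Theorem \ref{th:pcom}. The inclusion \eqref{pervj03} is in fact stronger than the generic conclusion \eqref{pervj} of that theorem: by the first line of \eqref{eq:ts02} in Proposition \ref{pr:temain}, the pair $(y_j, v_j)$ lies exactly in the graph of $F + B$, so no $\varepsilon$-enlargement is needed (this is the qualitative improvement over Algorithm \ref{alg:korp} recorded in the third remark following Algorithm \ref{alg:tseng}). The norm bound \eqref{des:vc03} then follows from \eqref{des:v_jcom} by substituting $\underline{\lambda} = \sigma/L$, which turns the factor $1/\underline{\lambda}$ into $L/\sigma$ and reproduces precisely the stated right-hand side. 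Finally, because $\varepsilon_k \equiv 0$ along the whole sequence, the estimate \eqref{des:e_jcom} becomes vacuous, which explains why part (b) of Theorem \ref{th:tseng} carries no bound on $\varepsilon_j$ (in contrast to Theorem \ref{th:korp}).

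Since the heavy lifting is already done in Proposition \ref{pr:temain} and in the general Theorems \ref{th:sconv} and \ref{th:pcom}, there is no genuine obstacle in this argument; the only point requiring a little care is the bookkeeping of the stepsize substitution $\underline{\lambda} = \sigma/L$ in the complexity constant and the observation that the $\varepsilon_j$-estimate collapses to a triviality. The truly nontrivial content — namely that the Tseng-type update \eqref{eq:itse02} produces a pair $(y_k, v_k)$ satisfying the relative-error criterion \eqref{eq:wood} (with $\varepsilon_k = 0$) — is precisely what Proposition \ref{pr:temain} establishes, and it is assumed here.
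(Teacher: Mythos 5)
Your proposal is correct and follows exactly the paper's own route: the paper proves Theorem \ref{th:tseng} by repeating the argument of Theorem \ref{th:korp}, i.e., invoking Proposition \ref{pr:temain} to realize Algorithm \ref{alg:tseng} as an instance of Algorithm \ref{alg:main} with $T = F+B$, $\lambda_k \equiv \sigma/L$ and $\varepsilon_k \equiv 0$, and then calling Theorems \ref{th:sconv} and \ref{th:pcom}. Your additional bookkeeping --- the substitution $\underline{\lambda} = \sigma/L$ in \eqref{des:v_jcom}, the exact inclusion from \eqref{eq:ts02} replacing the enlargement in \eqref{pervj}, and the vacuousness of \eqref{des:e_jcom} --- is precisely what the paper leaves implicit.
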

\begin{proof}
The proof follows the same outline of Theorem \ref{th:korp}'s proof, now using Proposition \ref{pr:temain} instead of Proposition \ref{pr:korp_sp}.
\end{proof}

\mgap

\brema \lab{rm:vega}
\emph{
Similarly to Remark \ref{rm:chris}, it is possible to use \eqref{pervj03} and \eqref{des:vc03} in order to estimate the number of iterations needed by Algorithm \ref{alg:tseng} to produce a pair $(y, v)$ such that
\begin{align} \lab{eq:vega02}
 v\in (F + B)(y)\;\;\mbox{and}\;\;\norm{v}\leq \rho
\end{align}
where $\rho > 0$ is a given tolerance. As we mentioned in the third remark following Algorithm \ref{alg:tseng}, one apparent advantage of
\eqref{eq:vega02} over the corresponding result in Remark \ref{rm:chris} is that the former gives a pair exactly in the graph of $F+B$ while the latter relies on enlargements.
}
\erema

\subsection{A strongly convergent inertial version of the forward-backward method} \label{subsec:fb}
In this subsection, we consider problem \eqref{eq:mipt}
under the assumption that $F(\cdot)$, $B(\cdot)$ and/or $C$ satisfy assumptions C1, C2 and C3 and the 
(cocoercivity) condition \eqref{eq:f.coco}.

\mgap
\mgap

\begin{algorithm}[H] \label{alg:fb}
\caption{A strongly convergent inertial version of the forward-backward method for solving \eqref{eq:mipt}}
\SetAlgoLined
\KwInput{$x_0 = x_{-1} \in \Ha$, $\sigma \in (0,1)$ and $\lambda = 2\sigma^2/L$}
\For{$k = 0, 1, 2, \dots$}{
Choose $\alpha_k, \beta_k \geq 0$ and set
\begin{align} \lab{eq:defw_fb}
\begin{aligned}
w_k &= x_k + \alpha_k (x_k - x_{k-1}),\\[2mm]
\widetilde{w}_k &= w_k + \beta_k (w_k - x_0).
\end{aligned}
\end{align}
\\
Let $w'_k = P_C(\widetilde w_k)$ and compute
\begin{align} \label{eq:fb}
 y_k = (\lambda B + I)^{-1}\left(\tiw_{k}-\lambda F(w'_{k})\right).
\end{align}
\\
Define
\begin{align} \lab{eq:HW_fb}
\begin{aligned}
 H_k &= \{z \mid \langle z-y_k,v_k \rangle \leq \varepsilon_k\},\\[2mm]
 W_k &= \{z \mid \langle z-x_k,x_0 - x_k \rangle \leq 0\},
\end{aligned}
\end{align}
where
\begin{align} \lab{eq:ve_fb}
\begin{aligned}
 v_k &= \dfrac{\tiw_{k}-y_k}{\lambda},\\[2mm]
\varepsilon_k &= \dfrac{\norm{y_k - w'_{k}}^2}{4L^{-1}}.
\end{aligned}
\end{align}
\\
Set
\begin{align}\lab{eq:P_fb}
 x_{k+1} = P_{H_k \cap W_k}(x_0).
\end{align}
}
\end{algorithm}

\mgap
\mgap

Next we make some remarks regarding Algorithm \ref{alg:fb}:
\begin{enumerate}[label = (\roman*)]
\item The role and meaning of the extrapolations $w_k$ and $\widetilde w_k$ as in \eqref{eq:defw_fb} are the same as discussed before in the first remark following Algorithm \ref{alg:main}. The main step in Algorithm \ref{alg:fb} consists in the computation of the (forward-backward step) $y_k$ as in \eqref{eq:fb}, which resembles the forward-backward method 
(see \cite{LM79, Pas79}).  The projection $w'_k$ of $\widetilde w_k$ over $C$ is necessary to recover feasibility of 
$\widetilde w_k$ with respect to (the feasible set) $C$. Note that of $C = \Ha$, i.e., if $F(\cdot)$ is defined in the whole space, then
one can simply take $w'_k = \widetilde w_k$. Furthermore, the half-spaces $H_k$ and $W_k$ and the update $x_{k+1}$ as in 
\eqref{eq:HW_fb} and \eqref{eq:P_fb}, respectively, are defined similarly to those in the previous algorithms of this paper. 
\item The main advantage of Algorithm \ref{alg:fb} when compared to Algorithm \ref{alg:tseng} is that the first requires no extra evaluation of $F(\cdot)$ at $y_k$, while the former requires the computation of $F(y_k)$ in order to define $v_k$ (see \eqref{eq:itse02}). On the other hand, Algorithm \ref{alg:tseng} provides a potentially better approximate solution to \eqref{eq:mipt}, since the inclusion in \eqref{pervj03} provides a point (exactly) in the graph of $F(\cdot) + B(\cdot)$, while 
\eqref{pervj03x} below gives a point in its $\varepsilon$-enlargement. 
\item While Algorithm \ref{alg:fb} is similar to the main algorithm as proposed and studied in~\cite[Eq. (3.1)]{DJCS17}, we mention that contrary to our work, \cite{DJCS17} doesn't provide any complexity analysis for their method. 
\end{enumerate}

\mgap

We now show that Algorithm \ref{alg:fb} is also a special instance of Algorithm \ref{alg:main}.

\bprop \label{pr:fb.e.hpp}
For the sequences evolved by \emph{Algorithm \ref{alg:fb}}, the following hold for all $k\geq 0$:
 \begin{align} \lab{eq:fb02}
   \begin{aligned}
   & v_k\in (F^{\varepsilon_k}+B)(y_k)\subset (F+B)^{\varepsilon_k}(y_k),\\[2mm]
   & \lambda v_k + y_k - \tiw_{k}=0,\quad
   2\lambda \varepsilon_k\leq \sigma^2 \norm{y_k-\tiw_{k}}^2.
 \end{aligned}
\end{align}
As a consequence, it follows that \emph{Algorithm \ref{alg:fb}} is a special instance of 
\emph{Algorithm \ref{alg:main}} \emph{(}with $\lambda_k\equiv \lambda$\emph{)} for solving \eqref{eq:mipt} with $T = F + B$.
\eprop
\begin{proof}
The proof follows the same outline of proof of~\cite[Proposition 5.3]{Sva14}.
\end{proof}

\mgap

Next is the main result regarding the (strong) convergence and iteration-complexity of Algorithm~\ref{alg:fb}.

\begin{theorem}[Strong convergence and iteration-complexity of Algorithm \ref{alg:fb}] \label{th:fb.main}
 Suppose assumptions \emph{C1}, \emph{C2} and \emph{C3} and the $(1/L)$-cocoercivity condition \eqref{eq:f.coco} as above hold. Consider the sequences evolved by \emph{Algorithm \ref{alg:fb}} and let $d_0$ denote the distance of $x_0$ to the solution set  $\mathcal{S}\neq \emptyset$ of \eqref{eq:mipt}, and suppose \emph{Assumption \ref{ass:ab}} holds on the sequences $\set{\alpha_k}$ and $\set{\beta_k}$. 
Then the following statements hold:
\begin{enumerate}[label = \emph{(\alph*)}]
\item The sequences $\{x_k\}$ and $\set{y_k}$ converge strongly to $x^\ast \coloneqq P_{\mathcal{S}}(x_0)$.
\item For all $k\geq 0$, there exists $j\in [k]$ such that
\begin{subnumcases}{ }
v_j\in (F^{\varepsilon_j} + B)(y_j), & \label{pervj03x}\\[3mm]
\norm{v_j} \leq \dfrac{2 d_0 L}{\sqrt{k+1}}\left( \frac{\sqrt{(1 + \overline \alpha)\left[(1 + \overline \beta)\left( 1 + \overline \alpha (1 + \overline \beta) \right) + \overline s\right]}}{2(1- \sigma ^2)\sigma^2}\right), & \label{des:vc3x} \\[3mm]
\varepsilon_j \leq \frac{d_0^2 L}{k+1}\left(\frac{4 \sigma^2 (1 + \overline \alpha)\left[(1 + \overline \beta)\left( 1 + \overline \alpha (1 + \overline \beta) \right) + \overline s\right]}{ 2(1 -\sigma^2)^2\sigma^2}\right).& \label{des:ec0x} 
\end{subnumcases}
\end{enumerate}
\end{theorem}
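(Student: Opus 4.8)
The plan is to exploit the reduction already carried out in Proposition \ref{pr:fb.e.hpp}, which shows that Algorithm \ref{alg:fb} is merely a particular realization of the general scheme of Algorithm \ref{alg:main}. Concretely, taking $T = F + B$ (which is maximal monotone by assumptions C1 and C2, as recalled just after \eqref{eq:mipt}), the constant stepsize $\lambda_k \equiv \lambda = 2\sigma^2/L$, and the triple $(y_k, v_k, \varepsilon_k)$ defined through \eqref{eq:fb} and \eqref{eq:ve_fb}, Proposition \ref{pr:fb.e.hpp} certifies that the relative-error condition \eqref{eq:wood} of Definition \ref{def:ine-sol} is met at $(\widetilde w_k, \lambda)$ for every $k$. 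Consequently, the whole statement follows by invoking the general convergence and complexity results of Section \ref{sec:main}, exactly in the spirit of the proof of Theorem \ref{th:korp} for the Korpolevich variant.

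For part (a), I would first note that since $\lambda_k \equiv \lambda = 2\sigma^2/L$ is a fixed positive constant, condition \eqref{eq:bawfz} holds trivially with $\underline\lambda = 2\sigma^2/L > 0$. As Assumption \ref{ass:ab} on $\set{\alpha_k}$ and $\set{\beta_k}$ is part of the hypotheses, Theorem \ref{th:sconv} applies verbatim and yields that both $\set{x_k}$ and $\set{y_k}$ converge strongly to $x^\ast = P_{\mathcal{S}}(x_0)$.

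For part (b), the plan is to call Theorem \ref{th:pcom} with the same $\underline\lambda = 2\sigma^2/L$, obtaining for every $k \geq 0$ an index $j \in [k]$ satisfying the generic estimates \eqref{des:v_jcom} and \eqref{des:e_jcom}. Substituting $1/\underline\lambda = L/(2\sigma^2)$ into those two bounds is a routine algebraic simplification that turns them precisely into \eqref{des:vc3x} and \eqref{des:ec0x}. The only point requiring a little care is the inclusion \eqref{pervj03x}: Theorem \ref{th:pcom} by itself only guarantees the coarser membership $v_j \in (F + B)^{\varepsilon_j}(y_j)$, whereas the sharper $v_j \in (F^{\varepsilon_j} + B)(y_j)$ is wanted. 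This finer inclusion is supplied directly by the first line of \eqref{eq:fb02} in Proposition \ref{pr:fb.e.hpp}, which holds for every index and hence in particular for the selected $j$.

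The main obstacle, in truth, lies not in this theorem but upstream in Proposition \ref{pr:fb.e.hpp}, where the cocoercivity condition \eqref{eq:f.coco} must be combined with the choice $\lambda = 2\sigma^2/L$ and the definition of $\varepsilon_k$ in \eqref{eq:ve_fb} to verify the relative-error inequality $2\lambda\varepsilon_k \leq \sigma^2\norm{y_k - \widetilde w_k}^2$; once that proposition is granted, the present theorem is a direct and essentially mechanical consequence of Theorems \ref{th:sconv} and \ref{th:pcom}.
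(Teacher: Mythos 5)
Your proposal is correct and follows exactly the paper's own route: the paper proves this theorem by citing Proposition \ref{pr:fb.e.hpp} to recognize Algorithm \ref{alg:fb} as an instance of Algorithm \ref{alg:main} with $\lambda_k \equiv 2\sigma^2/L$, then invoking Theorems \ref{th:sconv} and \ref{th:pcom} as in the proof of Theorem \ref{th:korp}. Your additional observation that the finer inclusion \eqref{pervj03x} must be drawn from the first line of \eqref{eq:fb02} rather than from Theorem \ref{th:pcom} alone is a correct and welcome bit of care that the paper leaves implicit.
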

\begin{proof}
The proof follows the same outline of Theorem \ref{th:korp}'s proof, now using Proposition \ref{pr:fb.e.hpp} instead of Proposition \ref{pr:korp_sp}.
\end{proof}

\section{Numerical experiments}
\lab{sec:ne}

In this section, we present preliminary numerical experiments for solving the matrix game problem
\begin{align} \lab{prob:mg}
\min_{x\in \Delta_n} \max_{y \in \Delta_m}\,\inner{x}{Ay},
\end{align}
where $ \Delta_n $ and $ \Delta_m $ denote the standard unit simplexes in $ \R^n $ and $ \R^m $, respectively, and $ A $ is a $ n \times m $ matrix.

\mgap

Problem \eqref{prob:mg} is equivalent to the monotone inclusion \eqref{eq:mipt} with
$ \Ha \coloneq \R^n \times \R^m $ (endowed with the standard inner product), and
$ F \colon \Ha \to \Ha $ and $ B \colon \HH \tos \HH $ given, respectively, by
\begin{align} \lab{eq:def.fbnum}
F(x, y) = (Ay, -A^\top x), \quad B(x, y) = N_{\Delta_n \times \Delta_m}(x, y)
\quad \text{for all} \enspace (x, y) \in \HH,
\end{align}
where $ N_{\Delta_n \times \Delta_m} $ denotes the normal cone of $ \Delta_n \times \Delta_m $. In this case, note that 
the sets $ \Dom F $ and $ C $ are both equal to $ \HH $ and $ \Dom B = \Delta_n \times \Delta_m $ (see conditions C1--C3 in Section \ref{sec:tseng}). Moreover, the map $ F $ as in \eqref{eq:def.fbnum} is clearly $ L $-Lipschitz continuous (see \eqref{eq:f.Lip}) with
$ L = \norm{A}_2 $. Also, note that the resolvent $ (\lambda B + I)^{-1} $ reduces to the orthogonal projection
$ P_ {\Delta_n \times \Delta_m} $ onto the closed and convex set $ \Delta_n \times \Delta_m $.

\mgap

We tested two algorithms for solving \eqref{prob:mg}:
\begin{itemize}
\item Algorithm \ref{alg:tseng} from this paper, which we call \emph{StrIneFBF}, with
$ \lambda = 0.37/\norm{A}_2 $, $ \beta_k \equiv 0 $\\ and
$ \alpha_k \equiv \alpha \in \set{0, 0.05, 0.10, 1, 2, 20, 50} $. 
\item The inertial variant of the Tseng's forward-backward method as given in~\cite[Algorithm 3]{AM20}, which we call here
\emph{IneFBF}, with $ \lambda_k \equiv \lambda = 0.37/\norm{A}_2 $, 
$ \alpha_k \equiv \alpha \in \set{0, 0.05, 0.10, 0.15, 0.25} $  
and $ \tau = 1 $ (in the notation of~\cite{AM20}). 
We observe that the theoretical analysis for IneFBF (see~\cite{AM20}) does not support choosing the inertial parameter 
$ \alpha_k \geq 0 $ larger than one.
\end{itemize}

The data matrix $ A $ was generated by using the Numpy commands \texttt{np.random.uniform(-1, 1, size=(n, n))}
and \texttt{np.random.seed(42)}.
We conducted the experiments in a Jupyter notebook executed on the Colab environment with Python 3.10.12
and used the following termination criterion:
\begin{align}
 \max\{\norm{x_{k+1} - x_k}, \norm{y_{k+1} - y_k}\} < 10^{-4}. 
\end{align}

\mgap

In Tables \ref{tb:01}, \ref{tb:02} and \ref{tb:03} we present the obtained numerical results for dimension $ n = 80 $, 
$ n = 150 $ and  $ n = 200 $, respectively. One can see that StrIneFBF (Algorithm \ref{alg:tseng} from this paper) outperforms IneFBF.

\mgap
\mgap

\begin{table}[htbp]
\centering
\setlength{\abovecaptionskip}{4pt} 
\setlength{\belowcaptionskip}{0pt} 
\caption{StrIneFBF vs. IneFBF ($ n=80 $).}
\lab{tb:01}
\small
\renewcommand{\arraystretch}{1}

\begin{tabularx}{0.92\textwidth}{l
  S[table-format=2.2,round-mode=places,round-precision=2]
  S[table-format=4.0]
  >{\centering\arraybackslash}X
  >{\centering\arraybackslash}X
  >{\centering\arraybackslash}X}
\toprule
Algorithm & {$ \alpha $} & {Iterations} &
{\footnotesize $ \norm{x_{k+1} - x_k} $} &
{\footnotesize $ \norm{z_k - P_C(z_k - F(z_k))} $} &
{\footnotesize $ \norm{x_k - P_C(x_k - F(x_k))} $} \\
\midrule
StrIneFBF  & 0.00   & 2616     & 0.000099 & 0.000056 & 0.000052 \\
StrIneFBF  & 0.05   & 940      & 0.000015 & 0.000042 & 0.000029 \\
StrIneFBF  & 0.10   & 979      & 0.000003 & 0.000055 & 0.000047 \\
StrIneFBF  & 1.00   & 3891     & 0.000100 & 0.000086 & 0.000061 \\
StrIneFBF  & 2.00   & 4        & 0.000000 & 0.005080 & 0.003064 \\
StrIneFBF  & 20.00  & 2        & 0.000000 & 0.109228 & 0.031494 \\
StrIneFBF  & 50.00  & 2        & 0.000000 & 0.109228 & 0.031494 \\
IneFBF     & 0.00   & 3759     & 0.000100 & 0.000258 & 0.000159 \\
IneFBF     & 0.05   & 3274     & 0.000098 & 0.000245 & 0.000168 \\
IneFBF     & 0.10   & 2924     & 0.000100 & 0.000251 & 0.000181 \\
IneFBF     & 0.15   & 3510     & 0.000099 & 0.000249 & 0.000161 \\
IneFBF     & 0.25   & 4376     & 0.000099 & 0.000270 & 0.000158 \\
\bottomrule
\end{tabularx}
\label{tab:resultados-unif-n80}
\end{table}

\mgap
\mgap

\begin{table}[htbp]
\centering
\setlength{\abovecaptionskip}{4pt} 
\setlength{\belowcaptionskip}{0pt} 
\caption{StrIneFBF vs. IneFBF ($ n=150 $).}
\lab{tb:02}
\small
\renewcommand{\arraystretch}{1}

\begin{tabularx}{0.92\textwidth}{l
  S[table-format=2.2,round-mode=places,round-precision=2]
  S[table-format=4.0]
  >{\centering\arraybackslash}X
  >{\centering\arraybackslash}X
  >{\centering\arraybackslash}X}
\toprule
Algorithm & {$ \alpha $} & {Iterations} &
{\footnotesize $ \norm{x_{k+1} - x_k} $} &
{\footnotesize $ \norm{z_k - P_C(z_k - F(z_k))} $} &
{\footnotesize $ \norm{x_k - P_C(x_k - F(x_k))} $} \\
\midrule
StrIneFBF  & 0.00   & 266     & 0.000082 & 0.000429 & 0.000187 \\
StrIneFBF  & 0.05   & 231     & 0.000086 & 0.000077 & 0.000052 \\
StrIneFBF  & 0.10   & 356     & 0.000058 & 0.000064 & 0.000045 \\
StrIneFBF  & 1.00   & 105     & 0.000100 & 0.001661 & 0.022685 \\
StrIneFBF  & 2.00   & 3       & 0.000000 & 0.020903 & 0.013953 \\
StrIneFBF  & 20.00  & 2       & 0.000000 & 0.019376 & 0.022685 \\
StrIneFBF  & 50.00  & 2       & 0.000000 & 0.019376 & 0.022685 \\
IneFBF     & 0.00   & 1541    & 0.000100 & 0.000163 & 0.000099 \\
IneFBF     & 0.05   & 1471    & 0.000098 & 0.000172 & 0.000093 \\
IneFBF     & 0.10   & 1404    & 0.000100 & 0.000186 & 0.000090 \\
IneFBF     & 0.15   & 2105    & 0.000099 & 0.000158 & 0.000084 \\
IneFBF     & 0.25   & 4628    & 0.000098 & 0.000143 & 0.000073 \\
\bottomrule
\end{tabularx}
\label{tab:resultados-unif-n150}
\end{table}

\mgap
\mgap

\begin{table}[htbp]
\centering
\setlength{\abovecaptionskip}{4pt} 
\setlength{\belowcaptionskip}{0pt} 
\caption{StrIneFBF vs. IneFBF ($ n=200 $).}
\lab{tb:03}
\small
\renewcommand{\arraystretch}{1}

\begin{tabularx}{0.92\textwidth}{l
  S[table-format=2.2,round-mode=places,round-precision=2]
  S[table-format=4.0]
  >{\centering\arraybackslash}X
  >{\centering\arraybackslash}X
  >{\centering\arraybackslash}X}
\toprule
Algorithm & {$ \alpha $} & {Iterations} &
{\footnotesize $\norm{x_{k+1} - x_k} $} &
{\footnotesize $\norm{z_k - P_C(z_k - F(z_k))}$} &
{\footnotesize $\norm{x_k - P_C(x_k - F(x_k))}$} \\
\midrule
StrIneFBF  & 0.00   & 216     & 0.000087 & 0.000339 & 0.000186 \\
StrIneFBF  & 0.05   & 193     & 0.000081 & 0.000412 & 0.000208 \\
StrIneFBF  & 0.10   & 283     & 0.000097 & 0.000167 & 0.000092 \\
StrIneFBF  & 1.00   & 42      & 0.000093 & 0.001730 & 0.000903 \\
StrIneFBF  & 2.00   & 3       & 0.000000 & 0.015022 & 0.009224 \\
StrIneFBF  & 20.00  & 2       & 0.000000 & 0.045028 & 0.016056 \\
StrIneFBF  & 50.00  & 2       & 0.000000 & 0.045028 & 0.016056 \\
IneFBF     & 0.00   & 1161    & 0.000100 & 0.000146 & 0.000090 \\
IneFBF     & 0.05   & 956     & 0.000099 & 0.000160 & 0.000094 \\
IneFBF     & 0.10   & 904     & 0.000100 & 0.000163 & 0.000100 \\
IneFBF     & 0.15   & 1397    & 0.000099 & 0.000144 & 0.000079 \\
IneFBF     & 0.25   & 2303    & 0.000098 & 0.000146 & 0.000073 \\
\bottomrule
\end{tabularx}
\label{tab:resultados-unif-n200}
\end{table}

\mgap
\mgap

\begin{figure}[!htbp]
  \centering
  \caption{Comparative residual evolution: StrIneFBF vs. IneFBF.}
  \label{fig:residual}
    \includegraphics[width=0.9\linewidth]{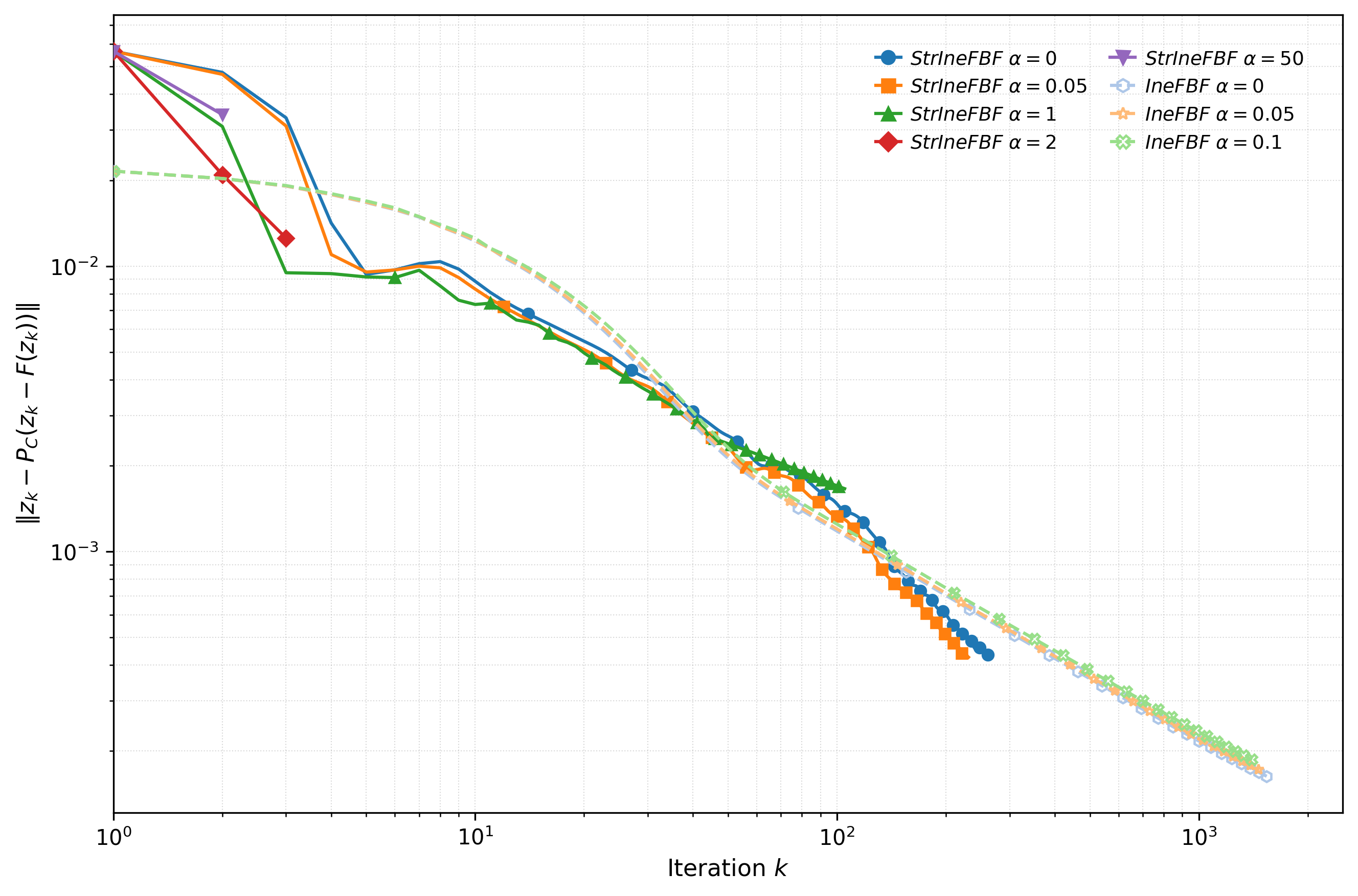} 
\end{figure}

\appendix

\section{Well-definedness of Algorithm \ref{alg:main}} \lab{subsec:wd}

In this appendix, we will prove the well-definedness of Algorithm \ref{alg:main} (note that $x_{k+1}$ as in \eqref{eq:wood04} depends on the nonemptyness of the set $H_k\cap W_k$). 
Recall that we are assuming that the solution $\mathcal{S} \coloneqq T^{-1}(0)$ of \eqref{eq:probm} is nonempty.
The proof of the following proposition follows the same outline of~\cite[Proposition 3]{SS00}.

\bprop \label{pr:well_defined}
Suppose \emph{Algorithm \ref{alg:main}} reaches the iteration $k$ and $x_k \in M_{\mathcal{S}}(x_0)$, where
\begin{align} \lab{def:msx}
M_{\mathcal{S}}(x_0)  \coloneqq \Set{ x \mid \inner{z-x}{x_0 - x} \leq 0 \quad \mbox{for all}\;\;  z \in \mathcal{S}}.
\end{align}
Then  the following statements hold:
\begin{enumerate}[label = \emph{(\alph*)}]
\item \lab{well_defined:seg} 
$\mathcal{S} \subset H_k \cap W_k$.
\item \lab{well_defined:ter}
$x_{k+1}$ is well-defined and $x_{k+1} \in M_{\mathcal{S}}(x_0)$.
\end{enumerate}
\eprop
\begin{proof} 
\emph{\ref{well_defined:seg}}  Since Algorithm \ref{alg:main} reaches iteration $k$, it follows that $H_k$ and $W_k$ as in \eqref{eq:wood02}
are well-defined. Moreover, using the inclusion in \eqref{eq:wood} and the definition of $T^\varepsilon$ as in \eqref{def:enlar} we conclude that $\mathcal{S} \subset H_k$. 
Since, by assumption, $x_k \in M_{\mathcal{S}}(x_0)$, it follows from \eqref{def:msx} that 
\[
\inner{z - x_{k}}{x_0 - x_k} \leq 0 \quad \mbox{for all}\quad z \in \mathcal{S},
\]
which is to say that $\mathcal{S}\subset W_k$ (see \eqref{eq:wood02}). Therefore, $\mathcal{S} \subset H_k \cap W_k$.

\mgap

\emph{\ref{well_defined:ter}} Since $\mathcal{S}\neq \emptyset$, from item (a) we obtain $H_k \cap W_k \neq \emptyset$, and so the next iterate $x_{k+1}$ is well-defined ($H_k \cap W_k$ is trivially closed and convex).
 Using the fact that $x_{k+1}$ is the projection of $x_0$ onto $H_k \cap W_k$, from \eqref{projetion-caracterization} we have
\[
\inner{z - x_{k+1}}{x_0 - x_{k+1}} \leq 0 \quad \mbox{for all}\quad  z \in H_k \cap W_k. 
\]
As $\mathcal{S} \subset H_k \cap W_k$, the above inequality then holds for all $z \in \mathcal{S}$, implying that 
$x_{k+1}\in M_{\mathcal{S}}(x_0)$. This finishes the proof of the proposition.
\end{proof}

As a consequence of the proposition above, we have that the whole algorithm is well-defined.

\bcoro[{Well-definedness of Algorithm \ref{alg:main}}]\label{cor:2}
We have that \emph{Algorithm 1} is well-defined and generates ``infinite'' sequences. 
Furthermore,  
\begin{align} \lab{eq:sshk}
 \mathcal{S} \subset H_k \cap W_k\quad \mbox{for all}\quad k\geq 0,
\end{align}
where $\mathcal{S} \coloneqq T^{-1}(0)\neq \emptyset$ is the solution set of \eqref{eq:probm}. 
\ecoro
\begin{proof}
Note that $x_0 \in M_{\mathcal{S}}(x_0)$, apply induction on $k\geq 0$, and use Proposition \ref{pr:well_defined}.
\end{proof}

\section{Auxiliary results} \lab{app:aux}

\begin{lemma}\label{lm:imp01}
Let $\Ha$ be a real Hilbert space. The following statement holds:
\begin{enumerate}[label = \emph{(\alph*)}]
\item \lab{imp01:seg} 
For any $x,y\in \Ha$ and $t\in \mathbb{R}$, we have
\begin{align*}
\norm{tx + (1 - t)y}^2 = t\norm{x}^2 + (1 - t)\norm{y}^2 - t(1 - t)\norm{x-y}^2.
\end{align*}
\item \lab{imp01:ter}
For any $x, y, z\in \Ha$, we have
\begin{align*}
2\langle x - y, x - z\rangle =  \|x-y\|^2 + \|x - z\|^2 - \|y-z\|^2.
\end{align*}
\item \lab{imp01:qua}
For every sequence $\set{x_n}$ and $x$ in $\Ha$, we have
\begin{align*}
x_n \to x\quad \mbox{if and only if}\quad  x_n\wto x\;\; \mbox{and}\;\; \norm{x_n}\to \norm{x}.
\end{align*}
\end{enumerate}
\end{lemma}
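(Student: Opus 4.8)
The plan is to treat the three items separately, since \iteref{imp01:seg} and \iteref{imp01:ter} are purely algebraic identities valid in any inner-product space, whereas \iteref{imp01:qua} is the genuinely analytic statement (the Radon--Riesz / Kadec--Klee property of Hilbert spaces) and is where the Hilbert structure is actually used.

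For item \iteref{imp01:seg}, I would simply expand the left-hand side using bilinearity and symmetry of the inner product, obtaining $\norm{tx + (1-t)y}^2 = t^2\norm{x}^2 + 2t(1-t)\inner{x}{y} + (1-t)^2\norm{y}^2$, and then independently expand the intended right-hand side after substituting $\norm{x-y}^2 = \norm{x}^2 - 2\inner{x}{y} + \norm{y}^2$; collecting the coefficients of $\norm{x}^2$, $\norm{y}^2$ and $\inner{x}{y}$ shows the two sides coincide. Item \iteref{imp01:ter} is handled in the same spirit: expanding $\inner{x-y}{x-z}$ yields $\norm{x}^2 - \inner{x}{y} - \inner{x}{z} + \inner{y}{z}$, while expanding each of $\norm{x-y}^2$, $\norm{x-z}^2$ and $\norm{y-z}^2$ on the right and summing yields exactly twice this quantity. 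Neither of these requires anything beyond routine algebra, so I would keep them brief.

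For item \iteref{imp01:qua} I would prove the two implications in turn. The forward direction is immediate: strong convergence implies weak convergence via Cauchy--Schwarz, since $\abs{\inner{x_n - x}{y}} \leq \norm{x_n - x}\,\norm{y} \to 0$ for every $y\in\Ha$, and the reverse triangle inequality $\abs{\norm{x_n} - \norm{x}} \leq \norm{x_n - x}$ gives $\norm{x_n}\to\norm{x}$. The reverse direction is the substantive one. Here I would start from the Hilbert-space expansion
\[
\norm{x_n - x}^2 = \norm{x_n}^2 - 2\inner{x_n}{x} + \norm{x}^2
\]
and pass to the limit: by hypothesis $\norm{x_n}^2 \to \norm{x}^2$, and by weak convergence applied to the fixed test vector $x$ one has $\inner{x_n}{x}\to\inner{x}{x} = \norm{x}^2$, so the right-hand side tends to $\norm{x}^2 - 2\norm{x}^2 + \norm{x}^2 = 0$, whence $\norm{x_n - x}\to 0$.

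The point I would flag as the crux is precisely this reverse implication of \iteref{imp01:qua}, since it is where the inner-product structure cannot be dispensed with: in a general Banach space, weak convergence together with convergence of the norms does \emph{not} force strong convergence. In the Hilbert setting the displayed identity converts $\norm{x_n - x}^2$ into three terms whose limits are each controlled --- two by the norm hypothesis and one by testing weak convergence against the single vector $x$ --- and the expression collapses cleanly to zero. The remaining items are routine expansions and present no obstacle.
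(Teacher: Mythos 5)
Your proof is correct. The paper itself states this lemma in Appendix B without any proof --- items \iteref{imp01:seg} and \iteref{imp01:ter} are routine inner-product expansions and item \iteref{imp01:qua} is the standard Radon--Riesz (Kadec--Klee) property of Hilbert spaces (see, e.g., \cite{BC11}) --- so there is no argument of the paper's to compare against; your argument is the standard one, and your identification of the reverse implication in \iteref{imp01:qua} as the only point where the Hilbert structure is essential is exactly right. One point you should make explicit rather than fix tacitly: item \iteref{imp01:seg} as printed contains a typo --- the right-hand side should read $t\norm{x}^2 + (1-t)\norm{y}^2 - t(1-t)\norm{x-y}^2$, since with $(1-t)\norm{x}^2$ as written the identity is false (take $t=0$, $x=0$, $y\neq 0$). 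You proved the corrected version, which is the identity actually invoked in the paper (it is the convex-combination identity behind \eqref{eq:key} in the proof of Proposition \ref{pr:main}), so your proof supports the paper's use of the lemma; just note the correction explicitly.
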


\begin{lemma}\label{lm:min}
 Given $a,b\geq 0$ with $a + b > 0$ and $c\in \mathbb{R}_+$, we have
 \begin{align*}
  \min\{a s^2 + b t^2  \mid s,t\geq 0\;\;\mbox{and}\;\; s + t \geq c\} = \frac{ab}{a + b} c^2.
 \end{align*}
\end{lemma}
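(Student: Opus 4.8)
The plan is to sidestep any case analysis on the three constraints and reduce the whole problem to a single pointwise quadratic inequality. First I would establish the algebraic identity
\[
(a+b)(as^2 + bt^2) - ab(s+t)^2 = (as - bt)^2,
\]
which follows by expanding both products and collecting terms. Since $a + b > 0$ by hypothesis, dividing by $a+b$ rearranges this into the pointwise bound
\[
as^2 + bt^2 \geq \frac{ab}{a+b}(s+t)^2
\]
valid for all real $s,t$, with equality precisely when $as = bt$.

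With this inequality in hand, the lower bound on the minimization value is immediate. For any feasible pair $(s,t)$ — that is, $s, t \geq 0$ with $s + t \geq c$ — we have $s + t \geq c \geq 0$, hence $(s+t)^2 \geq c^2$, and therefore
\[
as^2 + bt^2 \geq \frac{ab}{a+b}(s+t)^2 \geq \frac{ab}{a+b}\,c^2 .
\]
This shows the infimum over the feasible set is at least $\frac{ab}{a+b}c^2$.

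To finish, I would exhibit a feasible point attaining this value. The equality condition $as = bt$ combined with the tight constraint $s + t = c$ points directly to the candidate
\[
s^\ast = \frac{bc}{a+b}, \qquad t^\ast = \frac{ac}{a+b}.
\]
Both are nonnegative and satisfy $s^\ast + t^\ast = c$, so $(s^\ast, t^\ast)$ is feasible; moreover $as^\ast = \frac{abc}{a+b} = bt^\ast$, so equality holds in the pointwise inequality, and since $s^\ast + t^\ast = c$ a direct substitution gives $a(s^\ast)^2 + b(t^\ast)^2 = \frac{ab}{a+b}c^2$. Hence the bound is achieved and the minimum equals $\frac{ab}{a+b}c^2$.

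There is no serious obstacle here; the only step requiring a little foresight is spotting the sum-of-squares identity $(a+b)(as^2+bt^2) - ab(s+t)^2 = (as-bt)^2$. When $a, b > 0$ this is merely the Cauchy--Schwarz inequality $(s+t)^2 \leq \left(\frac{1}{a} + \frac{1}{b}\right)(as^2 + bt^2)$ in disguise, but the algebraic identity has the advantage of holding whenever $a + b > 0$. Thus it simultaneously delivers the sharp lower bound, pinpoints the minimizer, and covers the degenerate cases (such as $a = 0$ or $c = 0$) with no separate treatment.
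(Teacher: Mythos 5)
Your proof is correct and complete: the identity $(a+b)(as^2+bt^2)-ab(s+t)^2=(as-bt)^2$ yields the pointwise bound $as^2+bt^2\geq \frac{ab}{a+b}(s+t)^2$ for all real $s,t$ once $a+b>0$, the constraint $s+t\geq c\geq 0$ transfers this into the lower bound $\frac{ab}{a+b}c^2$ over the feasible set, and the exhibited pair $(s^\ast,t^\ast)=\bigl(\frac{bc}{a+b},\frac{ac}{a+b}\bigr)$ is feasible and attains it, so the minimum exists and has the claimed value. There is, however, nothing in the paper to compare this against: Lemma \ref{lm:min} is stated in Appendix \ref{app:aux} as an auxiliary fact with no proof given (it is used exactly once, in the proof of Proposition \ref{pr:main}(b), with $a=1$, $b=\beta_k$ and $c=\norm{x_{k+1}-x_0}$, to obtain the inequality preceding \eqref{eq:RT0}). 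The routine argument the authors likely had in mind would note that for $a,b>0$ the constraint must be active at a minimizer, substitute $t=c-s$, and minimize the one-variable quadratic $as^2+b(c-s)^2$, whose vertex $s=\frac{bc}{a+b}$ lies in $[0,c]$; that route needs a word about the degenerate cases $a=0$, $b=0$ (where the constraint need not be active) and about why the infimum is attained. Your sum-of-squares identity handles all of this at once --- in particular the case $b=\beta_k=0$, which is genuinely relevant since the paper allows $\beta_k\equiv 0$ --- and also pinpoints the equality case $as=bt$, so it is arguably the cleaner proof to insert here.
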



\def\cprime{$'$} \def\cprime{$'$}

\end{document}